\newtheorem{theorem}{Theorem}[section]
\newtheorem{proposition}[theorem]{Proposition}
\newtheorem{definition}[theorem]{Definition}
\theoremstyle{remark}
\theoremstyle{plain}
\newtheorem{main}{Theorem}
\newcommand{\ds}{\displaystyle}
\newcommand{\R}{\mathbb{R}}
\newcommand{\Z}{\mathbb{Z}}
\newcommand{\T}{\mathbb{T}}
\newcommand{\cF}{\mathcal{F}}
\newcommand{\cA}{\mathcal{A}}
\newcommand{\mS}{\mathcal{S}}
\newcommand{\OO}{\mathcal{O}}
\title{Anosov actions: minimality of foliations or suspension action}
\author{R.R. Lopes}
\address{DAMAT,
  UTFPR, Pato Branco-PR, Brazil.}
  \email{rodrigorlopes@utfpr.edu.br}
	\author{C. Maquera}
	\address{Departamento de Matem\'atica,
  ICMC - USP, S\~ao Carlos-SP, Brazil.
	}
\email{cmaquera@icmc.usp.br}
\author{R. Var\~{a}o} 
\address{Departamento de Matem\'atica, Estat\'istica e Computa\c c\~ao Cient\'ifica,
  IMECC-UNICAMP, Campinas-SP, Brazil.}
\email{varao@unicamp.br}
\begin{document}

\maketitle

\begin{abstract}
We prove that an Anosov action of $\mathbb R^k$ over a compact manifold $M$ transitive on regular sub-cones satisfies the dichotomy: each stable and unstable leaf is dense or the Anosov action is topologically conjugated to a suspension of a $\mathbb Z^k$-Anosov action. This represents an important progress toward addressing Verjovsky’s extended conjecture for Anosov actions, as developed by Barbot and Maquera.
\end{abstract}

% \tableofcontents

\section{Introduction}
Dmitri V. Anosov, in his classical monograph \cite{anosov-monograph}, proved what is now known as the \textit{Anosov alternative} for flows, which states that every strong unstable (or strong stable) leaf is dense or the Anosov flow is a suspension over an Anosov diffeomorphism. In the seventies, Verjovsky conjectured that every codimension one Anosov flow on a manifold of dimension $\geq 4$ is topologically equivalent to a suspension over a toral Anosov automorphism. Later, Barbot and Maquera \cite{barbot-11, maquera-11} have extended the Verjovsky conjecture for $\mathbb R^k$-actions.

\textbf{Verjovsky's conjecture for $\mathbb R^k$-actions:} Every irreducible codimension one Anosov action of $\mathbb R^k$ on a manifold of dimension at least $k+3$ is topologically equivalent to the suspension of a linear Anosov action of $\mathbb Z^k$ on the torus.

As mentioned by Barbot and Maquera \cite{barbot-11}, the most likely idea to prove Verjovsky's Conjecture seems to be to find a global cross section
to the $\mathbb{R}^k$-Anosov action, which would imply the existence of fibered space of the manifold $M$ over the torus $\mathbb{T}^k$. The proof of our main result, stated below, follows these ideas.

\begin{main}\label{th:dichotomy}
Let $\phi: \R^k\times M \rightarrow M$ be a transitive Anosov action by regular subcones. Then one of the following occurs:
\begin{enumerate}
\item Each strong unstable and each strong stable leaf is dense on  $M$, or;
\item The action $\phi$ is topologically conjugated to a suspension of a $\Z^k$-Anosov action on a compact manifold of dimension $\dim M - k$.
\end{enumerate} 
\end{main}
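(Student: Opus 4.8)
Write $TM=E^{ss}\oplus T\mathcal{O}\oplus E^{uu}$ for the Anosov splitting, with $T\mathcal{O}$ tangent to the $k$-dimensional orbit foliation $\mathcal{O}$, and let $\mathcal{F}^{uu},\mathcal{F}^{ss}$ (respectively $\mathcal{W}^{u},\mathcal{W}^{s}$) denote the strong (respectively weak) unstable and stable foliations. The plan is to reduce the whole statement to the single implication: \emph{if $\mathcal{F}^{uu}$ is not minimal, then $\phi$ is topologically conjugate to a suspension}. Indeed, reversing the action by precomposing with $g\mapsto -g$ on $\R^{k}$ exchanges $\mathcal{F}^{ss}$ with $\mathcal{F}^{uu}$, sends regular subcones to regular subcones and preserves transitivity; applying the implication to the reversed action then settles the case where $\mathcal{F}^{ss}$ fails to be minimal. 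Since a suspension structure visibly prevents \emph{both} strong foliations from being minimal, this closes the logical loop: the failure of (1) is precisely non-minimality of one of the strong foliations, and in either case we land in (2). So everything reduces to producing a suspension from a single non-dense strong unstable leaf.

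To start the construction I would fix a regular element $a$ in the transitive subcone, so that the reparametrized flow $\phi_{a}^{t}:=\phi(ta,\cdot)$ is a transitive Anosov flow whose strong unstable foliation is $\mathcal{F}^{uu}$. Non-minimality then yields a proper, closed, $\phi_{a}$-invariant, $\mathcal{F}^{uu}$-saturated set $\Lambda\subsetneq M$ (containing a minimal set of $\mathcal{F}^{uu}$). By the classical Anosov--Plante alternative for the flow $\phi_{a}$, this confined set produces a global cross-section for $\phi_{a}$, equivalently (Schwartzman--Fried) a closed $1$-form $\omega$ with $\omega(X_{a})>0$ everywhere, where $X_{a}=\sum a_{i}X_{i}$ is the infinitesimal generator of $\phi_{a}$ and $X_{1},\dots,X_{k}$ generate the $\R^{k}$-action.

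The next step is to upgrade this single transverse $1$-form into a $\T^{k}$-fibration of the whole action, exploiting the commuting generators $X_{1},\dots,X_{k}$ and the residual $\R^{k-1}$-action they induce on the cross-section of $\phi_{a}$. Concretely, I would produce $k$ closed $1$-forms $\omega_{1},\dots,\omega_{k}$ via Schwartzman--Sullivan asymptotic cycles of $\phi$-invariant measures carried by the confined leaves, arranged so that the matrix $[\omega_{i}(X_{j})]$ is everywhere nonsingular; a Tischler perturbation makes the periods rational and gives a submersion $\pi:M\to\T^{k}$ transverse to the orbit foliation $\mathcal{O}$.

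Finally, the fiber $N=\pi^{-1}(\ast)$ is a global cross-section of dimension $\dim M-k$; the $k$ commuting first-return maps generate a $\Z^{k}$-action on $N$ that inherits the hyperbolic splitting of $\phi$ and is therefore $\Z^{k}$-Anosov, and the conjugacy of $\phi$ to its suspension yields conclusion (2). The step I expect to be the genuine obstacle is the third one: for a flow a single transverse invariant measure, hence one positive closed form, suffices, whereas here I must attain the \emph{full rank} $k$, i.e.\ $k$ cohomologically independent forms that are \emph{simultaneously} nondegenerate on all $k$ orbit directions. Guaranteeing this rank, rather than a collapse to a lower-dimensional cross-section, is exactly where transitivity by regular subcones and the Weyl-chamber structure of the action must be exploited to supply $k$ independent asymptotic cycles; controlling the holonomy of the higher-codimension weak foliation $\mathcal{W}^{u}$ (codimension $\dim E^{ss}$, not $1$) is the delicate technical point.
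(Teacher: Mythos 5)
Your overall logical frame (failure of minimality of one strong foliation $\Rightarrow$ suspension, plus the time-reversal symmetry to handle $\mathcal{F}^{ss}$ versus $\mathcal{F}^{uu}$) matches the paper. But the engine you propose for producing the suspension has a genuine gap at its very first step: for $k\geq 2$ the reparametrized flow $\phi_a^t=\phi(ta,\cdot)$ is \emph{not} an Anosov flow. Its invariant splitting is $E^{ss}\oplus T\phi\oplus E^{uu}$ with a $k$-dimensional neutral bundle $T\phi$, so it is only partially hyperbolic with nontrivial center, and the classical Anosov--Plante alternative (non-dense strong unstable leaf $\Rightarrow$ global cross-section, via a transverse invariant measure and a positive closed $1$-form) does not apply to it. In addition, transitivity by regular subcones gives a dense orbit of the whole subcone $\mS$, not a dense orbit of the single flow $\phi_a^t$, so even the transitivity input of the flow-level alternative is not available. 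Since everything downstream (the Schwartzman--Fried form $\omega$, the upgrade to $k$ forms, the Tischler perturbation) is built on this step, the argument as written does not get off the ground; and the step you yourself flag as the obstacle --- achieving full rank $k$ for the matrix $[\omega_i(X_j)]$ --- is left entirely unargued, whereas it is precisely where the work would have to happen.

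For comparison, the paper avoids asymptotic cycles altogether and instead redoes Plante's argument at the level of the action: a Zorn's-lemma minimal $\mathcal{F}^{uu}$-saturated set inside $\overline{\cF^{uu}(p)}$ for a periodic point $p$ yields a partition of $M$ into the sets $\overline{\cF^{uu}(q)}$, $q\in\mathcal{O}_p$ (Proposition \ref{prop:disjointunion}); the compactness of $\mathcal{O}_p$ and a basis of the isotropy lattice $\Gamma_p\cap\mS$ then give the fibration over $\T^k$ directly (Proposition \ref{Pfibrado}), with no cohomological rank issue because the $k$ independent directions come for free from the lattice $\Gamma_p$. The remaining work --- which your proposal does not address at all --- is to give the fiber $K=\overline{\cF^{uu}(p)}$ a manifold structure; the paper does this by showing $K$ is also $\cF^{ss}$-saturated and that $E^{ss}\oplus E^{uu}$ is integrable (Propositions \ref{propeqint} and \ref{propdint}), so that $K$ is a closed leaf of the integral foliation. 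If you want to salvage your route, you would need a version of the Plante/Schwartzman machinery valid for partially hyperbolic $\R^k$-actions together with a proof that the confined leaf produces $k$ independent, simultaneously nondegenerate asymptotic cycles; that is essentially equivalent in difficulty to what the paper proves directly.
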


We generalize the \textit{Anosov alternative} for flows, following \cite{anosov-monograph,plante}, to Anosov actions. We also point out that Theorem \ref{th:dichotomy} is a partial answer to the Barbot-Maquera extension of the Verjovsky conjecture for $\mathbb R^k$-actions since, in the context of our main result above, if one of the foliations $\mathcal{F}^{ss}$ or $\mathcal{F}^{uu}$ is not minimal, then the action is a suspension of a $\mathbb{Z}^k$ action.

\section{Preliminaries}\label{sec:preliminaries}
\label{prerequisitos}

In this section, we review a few basic definitions and present the results needed throughout the paper. 

\subsection{Lie group actions}
An action of a Lie group $G$ in a manifold $M$ is a $C^r$ map $\phi:G \times M \rightarrow M$ (with $(r\geq 1)$)  that satisfies the following conditions: 

\begin{enumerate}
 \item $\phi(e,p)= p$, for all $p\in M$,  where $e$ denotes the identity element of $G$;
 \item $\phi(u\cdot v,p)=\phi(u,\phi(v,p))$, for all $u,v \in G$ and for all $p \in M$.
\end{enumerate}

 For each $a\in G$, the action induces a diffeomorphism
$\phi^a$ defined by $\phi^a(p)=\phi(a, p)$ for all $p\in M$. 
The orbit of $p\in M$ under the action $\phi$ is defined as $$\OO_p = \OO_p(\phi)=\left\{\phi^a(p)\ |\  a\in G \right\}.$$ 

The isotropy subgroup of $p\in M$ with respect to the action $\phi$ is given by  $$\Gamma_p= \left\{v\in G\ |\ \phi^v(p)=p\right\}.$$ 
It is immediate that if $q=\phi^u(p)$, then $\Gamma_q=\Gamma_p$. Thus, the isotropy subgroup of as orbit $\mathcal{O}_p$ can be denoted by $\Gamma_{\mathcal{O}_p}$. 

If the action is $C^r$, then for each $p\in M$ the map $\phi_p: G \rightarrow M$, defined by $\phi_p(v) = \phi(v, p)$, is also $C^r$ and induces the map $\overline{\phi}_p:G/\Gamma_p\rightarrow M$ which is an injective immersion whose image is $\OO_p$. Consequently, the isotropy subgroup $\Gamma_p$ is closed. Therefore, topologically, the orbit $\OO_p$ has as many possibilities as the closed subgroups of $G$. In particular, if $G = \mathbb R^k$ then each orbit can be homeomorphic to $\R^k$ or $\T^k$ or $\R^{k-l}\times \T^l$ with $1\leq l \leq k-1$. 

An action $\phi: G \times M \rightarrow  M$ is called a foliated action if, for every $p \in M$, the tangent space $T_p \OO_p(\phi)$ has fixed dimension $l$. In this case, the subbundle of the tangent bundle $TM$ formed by the tangent spaces to the orbits defines a foliation on $M$, denoted by $T\phi$. If $l = \dim G$, the action $\phi$ is locally free.  The orbits of the foliated action $\phi$ correspond to the leaves of the foliation on the manifold $M$.

 Consider an action $\psi$ of $\mathbb{Z}^k$ in a closed manifold $S$. The suspension of $\psi$ is the quotient space $M$ obtained from $S\times \mathbb{R}^k$ by identifying each $(x,u)$ with $(\psi^v(x),u+v)$ for every $v$ in $\mathbb{Z}^k$, and is equipped with the action $\phi: \mathbb{R}^k\times M \rightarrow M$ defined by $\phi(v,[(x,t)])=[(x,t+v)]$. By construction, the canonical projection $\pi: S\times \mathbb{R}^k\rightarrow M$ is a covering map and guarantees that the orbit of $\phi$ has dimension $k$. Additionally, for all $v\in \mathbb{R}^k$,  $S_v=\pi(S\times\{v\})$ is an embedded submanifold in $M$, diffeomorphic to $S$, which defines a transversal foliation to the $\phi$-orbits. In particular,  if $\{e_i\}$ is the canonical base of $\mathbb{R}^k$ then $\pi((x,e_i))=\pi(\psi^{e_i}(x),0)$, thereby determining a ``first return''  on $S_0$ of the action $\phi$ in the direction of the vector $e_i$.

\subsection{Anosov actions}

Hirsch, Pugh, and Shub have defined Anosov actions and studied them in general context \cite{hirsch,pugh-shub}. 

\begin{definition} \label{defanosov}
An action $\phi$ of $\mathbb R^k$ on a smooth manifold $M$ is called an Anosov action if it is locally free and there exists an element $a\in\mathbb{R}^k$ such that the diffeomorphism $g=\phi^{a}$ has the following properties: There exist real constants $\lambda =\lambda(a) > 0$, $C= C(a)>0$ and a splitting of  $TM$ invariant by $Dg$, $TM=E_{a}^{ss}\oplus T\phi \oplus E_{a}^{uu}$, such that:
\end{definition}
\begin{enumerate}
 \item $||Dg^n|_{E_{a}^{ss}}||\leq Ce^{-\lambda n},\ \forall n>0$;
 \item $||Dg^n|_{E_{a}^{uu}}||\leq Ce^{\lambda n}, \ \ \forall n<0$.
\end{enumerate}

The element $a\in \R^k$ from the definition above is known as the \emph{hyperbolic element} or \emph{Anosov element} of the action. We denote by $\cA=\cA(\phi)$ the set of Anosov elements of the action $\phi$. An Anosov action is said to be codimension one if the subbundle $E_a^{ss}$ or $E_a^{uu}$ is one-dimensional. According to the Hisch-Pugh-Shub theory \cite{hirsch} for normally hyperbolic maps, the set of Anosov elements $\cA$ is an open set in $\R^k$, and each connected component of $\cA$ is a convex open cone. The subbundles $E_{a}^{ss}$ and $E_{a}^{uu}$ are invariant under the action $\phi$ because for all $v\in \R^k$, these subbundles are $D\phi^v$-invariant. In fact, if $h: M\rightarrow M$ is a diffeomorphism such that the subbundle $T\phi$ is $Dh$-invariant, then the subbundles $E_{a}^{ss}$ and $E_{a}^{uu}$ are also $Dh$-invariant. Therefore, the constants $C= C(a)$ and $\lambda = \lambda(a)$ can be fixed in each connected component. Thus, the subbundles $E_{a}^{ss}$ and $E_{a}^{uu}$ can remain the same for a connect component $\cA_0$ of $\cA$. Consequently, we consider the dynamics of the action $\phi$ on $\cA_0$ where the subbundles $E^{ss}$ and $E^{uu}$ are fixed.

\begin{definition}\label{defsubregular}
A regular subcone $\mS$ is an open connected subcone contained in $\cA_0$. Given $x \in M$, the orbit of $x$ with respect to the regular subcone $\mS$ is the set $\OO_{\mS}(x) = \{\phi^v(x) \ | \ v \in \mS\}$. 
\end{definition}

Hirsch, Pugh and Shub \cite{hirsch} proved the H\"older continuity of the subbundles $E^{ss}$, $E^{uu}$, $E^{ss}\oplus T\phi$ and $E^{uu}\oplus T\phi$, as well as their integrability. The corresponding foliations $\mathcal{F}^{ss}$, $\mathcal{F}^{uu}$, $\mathcal{F}^{s}$ and $\mathcal{F}^{u}$ are  respectively called the strong stable foliation, strong unstable foliation, weak stable foliation, and weak unstable foliation. It is also known that these foliations are absolutely continuous and for a given point $p\in M$, the weak leaves are
\begin{align} \label{formula1}
	\mathcal{F}^{s}(p) = \displaystyle \bigcup_{q\in \mathcal{O}_p}\mathcal{F}^{ss}(q)\ \ \mathrm{e} \ \ 
	\mathcal{F}^{u}(p) = \displaystyle \bigcup_{q\in \mathcal{O}_p}\mathcal{F}^{uu}(q).
\end{align}

The metric on the leaf  $\cF^i(x)$ is the induced metric from the manifold  $M$, where $i = s,\ ss,\ u,\ uu$. For an adapted metric in $M$, it can be assumed $C=1$ and $e^\lambda<1$. Consequently, for each $x\in M$ and $a\in S$: 

 \begin{itemize}
     \item  If $y,z\in \cF^{ss}(x)$, then for $t>0$,
    $$d(\phi^{ta}(y),\phi^{ta}(z)\leq e^{\lambda t} d(y,z).$$ 
    \item If $y,z\in \cF^{uu}(x)$, then for $t>0$,
    $$ d(\phi^{-ta}(y),\phi^{-ta}(z)\leq e^{\lambda t} d(y,z).$$
    \item If $y,z \in \cF^{s}(x)$, then for $t>0$, 
$$ d(\phi^{ta}(y),\phi^{ta}(z)\leq d(y,z).$$
\item If $y,z\in \cF^u(x)$, for $t>0$, then $$d(\phi^{-ta}(y),\phi^{-ta}(z)\leq  d(y,z).$$
 \end{itemize}
 
For $\delta> 0$, the set $\cF^i_\delta(x)$ is the ball centered at $x$ with radius $\delta$ contained  in the leaf $\cF^i(x)$. The topology of $M$ and the foliations are related, as there exist $\gamma> 0$ such that for $0<\delta <\gamma$ the ball $\cF^i_\delta(x)$ is given by $\cF^i_\gamma(x) \cap B_\delta(x)$, where $B_\delta(x)$ the ball centered at $x$ with radius $\delta$ contained in $M$.

Note that the foliations $\cF^s$ and $\cF^{uu}$ are transversal to each other, as well as $\cF^{ss}$ and $\cF^u$. One can find local charts of $M$ that simultaneously trivialize  $\cF^s$ and $\cF^{uu}$.  Likewise, there are local charts that simultaneously trivialize $\cF^{ss}$ and $\cF^u$. These charts induce a local product structure on $M$, as described by the following theorem.

\begin{theorem}[Local product structure \cite{maquera-11}]\label{produtolocal2}
Let $\phi:\mathbb{R}^k\times M \rightarrow M$ be an Anosov action on a compact connected manifold $M$. Then there exists $\delta_0>0$ such that for all $\delta \in (0,\delta_0)$ and for all $x\in M$, the maps
$$\begin{array}{c}
[\cdot,\cdot]^{u} : \mathcal{F}^{s}_\delta(x) \times \mathcal{F}^{uu}_\delta(x) \rightarrow M;\ \ \ [y,z]^{u} = \mathcal{F}^{s}_{2\delta}(z) \cap \mathcal{F}^{uu}_{2\delta}(y)\\

[\cdot,\cdot]^{s} : \mathcal{F}^{ss}_\delta(x) \times \mathcal{F}^{u}_\delta(x) \rightarrow M;\ \ \  [y,z]^{s} = \mathcal{F}^{ss}_{2\delta}(z) \cap \mathcal{F}^{u}_{2\delta}(y)
\end{array}$$ 
are homeomorphisms over their images.
\end{theorem}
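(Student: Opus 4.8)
The plan is to reduce everything to the elementary fact that two transverse foliations of complementary dimension admit simultaneous product charts, and then to extract uniformity from the compactness of $M$. First I would record the dimension count: since $T\cF^s = E^{ss}\oplus T\phi$ and $T\cF^{uu} = E^{uu}$, the two foliations are transverse with $\dim\cF^s + \dim\cF^{uu} = \dim E^{ss} + k + \dim E^{uu} = \dim M$, so they are complementary; the same holds for the pair $(\cF^{ss},\cF^u)$. Consequently the two assertions are symmetric under time reversal (replacing the Anosov element $a$ by $-a$ interchanges the roles of $\cF^s$ and $\cF^u$, and of $\cF^{ss}$ and $\cF^{uu}$), so it suffices to treat $[\cdot,\cdot]^u$.

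Next I would exploit the simultaneous trivializing charts noted just before the statement. Fix, for each $p\in M$, a chart $\Psi_p: U_p \to \R^a\times\R^b$ (with $a=\dim\cF^s$, $b=\dim\cF^{uu}$) sending the plaques of $\cF^s$ to the horizontal slices $\R^a\times\{v\}$ and the plaques of $\cF^{uu}$ to the vertical slices $\{u\}\times\R^b$. In such coordinates an $\cF^s$-plaque and an $\cF^{uu}$-plaque meet in exactly one point, and the operation $[\cdot,\cdot]^u$ becomes, up to the homeomorphism $\Psi_p$, the map $\big((u_1,v_0),(u_0,v_1)\big)\mapsto (u_1,v_1)$ — manifestly a homeomorphism onto its image, with inverse $w\mapsto\big(\cF^{uu}(w)\cap\cF^s(x),\,\cF^s(w)\cap\cF^{uu}(x)\big)$. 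The only things to verify are that all the plaques involved stay inside a single chart and that the intersection realizing $[y,z]^u$ lies within the intrinsic radius $2\delta$.

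To obtain the uniform $\delta_0$ I would cover the compact $M$ by finitely many such charts, let $\rho$ be a Lebesgue number of this cover for the ambient metric, and use the relation $\cF^i_\delta(x)=\cF^i_\gamma(x)\cap B_\delta(x)$ stated above, together with the uniform equivalence between intrinsic leaf distance and ambient distance on small balls, to pick $\delta_0$ so small that $\cF^s_{2\delta_0}(x)\cup\cF^{uu}_{2\delta_0}(x)\subset B_{\rho/2}(x)$ for every $x$; then every plaque entering the definition lies in one chart $U_{p}$, and continuity of the charts yields continuity of $(y,z)\mapsto[y,z]^u$ and of its inverse.

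The main obstacle is the last step: showing the intersection point genuinely exists \emph{within radius} $2\delta$ and is unique, uniformly in $x$ and in $(y,z)$. Uniqueness is immediate from the product chart. For existence within radius $2\delta$ the key is a quantitative transversality estimate: by the H\"older continuity of the subbundles $E^{ss}\oplus T\phi$ and $E^{uu}$ the angle between the two foliations is bounded below uniformly on $M$, so the $\cF^s$-plaque through $z$ and the $\cF^{uu}$-plaque through $y$ do cross, and the crossing point sits at intrinsic distance at most $c\,\delta$ from each of $y$ and $z$, for a constant $c$ depending only on the transversality angle. Shrinking $\delta_0$ forces $c\,\delta < 2\delta$, so the buffer factor $2$ absorbs the leaf drift, and the maps are homeomorphisms onto their images as claimed.
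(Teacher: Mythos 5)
The paper offers no proof of this theorem: it is imported verbatim from Barbot--Maquera \cite{maquera-11}, and the only justification given in the text is the sentence preceding the statement, namely that one can find local charts simultaneously trivializing $\mathcal{F}^s$ and $\mathcal{F}^{uu}$ (resp.\ $\mathcal{F}^{ss}$ and $\mathcal{F}^u$) and that these induce the product structure. Your proposal follows exactly that route (simultaneous product charts, then compactness and a Lebesgue number for uniformity of $\delta_0$), so in spirit it coincides with what the paper and its reference intend. Two caveats are worth flagging. First, a mild circularity risk: the existence of a chart sending $\mathcal{F}^s$-plaques to horizontal slices and $\mathcal{F}^{uu}$-plaques to vertical slices is essentially equivalent to the local product structure, so you should make explicit that such charts come from the Hirsch--Pugh--Shub stable manifold theory (the leaves are uniformly $C^1$ embedded discs depending continuously on the base point, and the tangent distributions are continuous and uniformly transverse on the compact $M$); granting that, the unique transverse intersection of a small $\mathcal{F}^s$-plaque with a small $\mathcal{F}^{uu}$-plaque follows from the implicit function theorem, not from a pre-existing product chart.

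Second, your final step contains a genuine logical slip: for a fixed constant $c$ depending only on the transversality angle, the inequality $c\,\delta < 2\delta$ is equivalent to $c<2$ and cannot be achieved by shrinking $\delta_0$. If the angle between $E^{ss}\oplus T\phi$ and $E^{uu}$ is small, the intersection point of the two plaques through $y$ and $z$ lies at intrinsic distance roughly $d(y,z)/\sin\theta$, which can exceed $2\delta$. The standard repair is to work with an adapted Riemannian metric in which the splitting $E^{ss}\oplus T\phi\oplus E^{uu}$ is orthogonal at every point (such a metric always exists, and the paper already assumes an adapted metric for the contraction estimates); then in the linear model the constant is $1$, and since the plaques are uniformly $C^1$ the true constant is $1+o(1)$ as $\delta\to 0$, so the factor $2$ in the statement does absorb the drift for $\delta_0$ small. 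Alternatively one states the conclusion with $C\delta$ in place of $2\delta$ for some uniform $C$. With either fix your argument is complete.
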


\subsection*{Non-wandering set}
As in discrete dynamical systems, the non-wandering set plays a vital role in group action dynamics. To understand the non-wandering set of an Anosov action, we first need to study the action's periodic elements, including the relationship between isotropy subgroups and compact orbits. 

The following results demonstrate that all the information about the dynamics of an Anosov action can be found within the regular subcones. We begin with the Closing Lemma, an essential tool for dynamical systems induced by group actions.

\begin{theorem}[Katok - Spatzier \cite{katok-94}]\label{closing} If $a\in \cA_0$ then there exist positive constants $\varepsilon_0$, $C$ and $\lambda$ depending continuously on $\phi$ in the $C^1$ topology such that for all $x \in M$ and $t>0$ satisfying $$d(\phi^{ta}x, x) < \varepsilon_0,$$ then there exists a point $y, \in M$ and a differentiable curve $\gamma: [0,t] \rightarrow \R^k$ such that for all $s\in[0,t]$ one has:
\end{theorem}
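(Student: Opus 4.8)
The plan is to realize the near-periodic orbit of $x$ as a genuine periodic orbit of a nearby point $y$ by a contraction argument transverse to the orbit foliation, exploiting that $g=\phi^{a}$ is normally hyperbolic with respect to $T\phi$. Writing $P=\phi^{ta}$, the hypothesis $d(Px,x)<\varepsilon_{0}$ says that $x$ is almost fixed by $P$; the goal is to produce $y$ close to $x$ and a return vector $w\in\R^{k}$ close to $ta$ with $\phi^{w}(y)=y$, and then to take $\gamma$ to be a smooth interpolation from $0$ to $w$ that is $C^{0}$-close to the straight path $s\mapsto sa$. The curve $\gamma$, rather than a single return time, is forced on us by the $k$-dimensional neutral directions: closing up the orbit requires simultaneously adjusting the time vector spent travelling along $T\phi$.

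First I would fix $\delta\in(0,\delta_{0})$ as in Theorem~\ref{produtolocal2} and use the local product structure to decompose the displacement from $x$ to $Px$ into its strong stable, central (orbit), and strong unstable components; local freeness identifies the central component with a vector of $\R^{k}$ near $0$ through the orbit map $v\mapsto\phi^{v}(x)$, which is a local diffeomorphism onto $\OO_{x}$. Next I would define a map $\Psi$ on a small product neighborhood $\cF^{ss}_{\delta}(x)\times\R^{k}\times\cF^{uu}_{\delta}(x)$ whose fixed point encodes periodicity: one alternately uses forward iteration by $P$ to contract along $\cF^{ss}$ and backward iteration by $P^{-1}$ to contract along $\cF^{uu}$, recombines the two components via the local product structure, and solves in the central factor for the return vector $w$ that cancels the orbit drift. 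By the listed contraction estimates in the adapted metric, where $e^{\lambda}<1$, the map $\Psi$ is a contraction on a ball of radius of order $\varepsilon_{0}$, hence has a unique fixed point yielding $y$ and $w$ with $\phi^{w}(y)=y$; the contraction constant then produces the shadowing bound $d(\phi^{\gamma(s)}(y),\phi^{sa}(x))\le C\,d(Px,x)$ for $s\in[0,t]$, and $\gamma$ is obtained by interpolating the accumulated central drift linearly in $s$.

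The main obstacle is the $k$-dimensional neutral bundle $T\phi$, on which there is no contraction, so the argument is genuinely a \emph{coupled} fixed-point problem: the hyperbolic coordinates close up automatically, but the central coordinate can only be closed by choosing the return vector $w$, and one must check that this central equation is solvable and compatible with the hyperbolic contraction on the same ball. Invertibility here rests on local freeness, which lets me trade a small central displacement for a small correction of the time vector; keeping all estimates uniform, and verifying that the constants $\varepsilon_{0}$, $C$, $\lambda$ vary continuously with $\phi$ in the $C^{1}$ topology, follows from the structural stability of the normally hyperbolic splitting and of the local product structure in the sense of Hirsch--Pugh--Shub, which I would invoke rather than reprove.
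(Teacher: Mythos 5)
The paper does not prove this statement: it is quoted verbatim from Katok--Spatzier \cite{katok-94} and used as a black box, so there is no internal proof to compare against. Your outline is nevertheless the standard strategy behind that reference -- decompose the displacement from $x$ to $\phi^{ta}x$ via the local product structure into strong stable, strong unstable, and central components, close the hyperbolic components by a contraction/graph-transform argument, and absorb the $k$-dimensional neutral drift into a correction of the time vector, which is exactly why the conclusion involves a curve $\gamma$ in $\R^k$ rather than a single return time. Identifying the central equation as the genuinely new, coupled part of the fixed-point problem for $\R^k$-actions is the right emphasis.

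Two points in your sketch fall short of the statement as quoted. First, conclusion (1) asserts the shadowing error is bounded by $C e^{-\lambda \min\{s,t-s\}}\, d(\phi^{ta}x,x)$, i.e.\ it improves exponentially toward the middle of the orbit segment; your argument only yields the uniform bound $C\, d(\phi^{ta}x,x)$. To recover the stated estimate you must track that the stable component of the error contracts under forward time and the unstable component under backward time, so that at time $s$ the residual error is controlled by $e^{-\lambda s}$ from one end and $e^{-\lambda(t-s)}$ from the other; this refinement is what makes the lemma usable for specification-type arguments and is not a cosmetic strengthening. Second, your contraction constant for $P=\phi^{ta}$ on $E^{ss}$ is $e^{-\lambda t}$, which tends to $1$ as $t\to 0$, so the map $\Psi$ is not a uniform contraction for small $t$; you need either to assume $t\ge t_0>0$ (and handle short near-returns separately, where $d(\phi^{ta}x,x)<\varepsilon_0$ with $t$ small forces the orbit segment to nearly close within a single flow box) or to iterate over a long block before applying the fixed-point theorem. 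With those two repairs, and the appeal to Hirsch--Pugh--Shub persistence for the continuity of $\varepsilon_0$, $C$, $\lambda$ in the $C^1$ topology, the plan is sound.
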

\begin{enumerate}
\item $d(\phi^{sa}x, \phi^{\gamma(s)}y) < C e^{-\lambda \min\{s,t-s\}} d(\phi^{ta}x,x)$;
\item $\phi^{\gamma(t)}y = \phi^{\delta} y$, for some $\delta$ with $||\delta ||< C d(\phi^{ta}x,x)$;
\item $||\gamma' - a || < C d(\phi^{ta}x,x)$. 
\end{enumerate}

Note that the point $y$ is a fixed point for $\phi^{\gamma(t)-\delta}$, which means that the orbit $\OO_y$ is a cylinder or compact.

\begin{proposition}[Barbot - Maquera \cite{maquera-11}] \label{orbcompact} Let $\phi$ be an Anosov action of $\R^k$ on a smooth connected manifold $M$. Then each orbit action $\OO_p$, whose isotropy subgroup $\Gamma_p$ contains an Anosov element $a \in\cA_0$, is compact. 
\end{proposition}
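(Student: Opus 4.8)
The plan is to reduce the statement to showing that the orbit $\OO_p$ is \emph{closed} in $M$. Since the action is locally free, the isotropy subgroup $\Gamma_p$ is discrete, so $\OO_p$ is homeomorphic to $\R^k/\Gamma_p\cong \T^m\times\R^{k-m}$ with $m=\mathrm{rank}\,\Gamma_p$; this is compact precisely when $\OO_p$ is closed, because a closed subset of the compact manifold $M$ is compact. Thus it suffices to prove $\overline{\OO_p}=\OO_p$. Write $g=\phi^a$ and $K=\overline{\OO_p}$.

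The first observations are soft. Since $a\in\Gamma_p$ and $\Gamma_q=\Gamma_p$ for every $q=\phi^w(p)\in\OO_p$, we have $g(q)=\phi^a(\phi^w(p))=\phi^w(\phi^a(p))=q$, so $g$ fixes $\OO_p$ pointwise; by continuity and density $g|_K=\mathrm{id}$. Moreover $K$ is $\phi$-invariant, being the closure of a single orbit, so $\OO_q\subseteq K$ for every $q\in K$. In particular every point of $K$ is a fixed point of the partially hyperbolic diffeomorphism $g$, whose invariant splitting is $E^{ss}\oplus T\phi\oplus E^{uu}$.

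The heart of the argument, and the step I expect to be the main obstacle, is to show that near each $q\in K$ the fixed-point set of $g$ is no larger than the orbit: there is a neighborhood $U_q$ with $\mathrm{Fix}(g)\cap U_q\subseteq\OO_q$. Here the hyperbolicity transverse to the center does the work. Conceptually, $Dg_q$ contracts $E^{ss}$, expands $E^{uu}$, and restricts to the identity on $T\phi$ (because $g$ is the identity on the orbit), so $\ker(Dg_q-\mathrm{Id})=T\phi$ and the local center manifold of $g$ at $q$ is exactly $\OO_q$. Concretely, I would take a fixed point $w\in U_q$ and decompose it with the local product structure of Theorem \ref{produtolocal2}, writing $w=[y,z]^u$ with $y\in\cF^s_\delta(q)$ and $z\in\cF^{uu}_\delta(q)$. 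Since $w\in\cF^s(z)$ and $z\in\cF^{uu}(q)$, the weak-stable estimate gives $d(g^n z,w)=d(g^n z,g^n w)\le d(z,w)\le 2\delta$ for all $n\ge 0$, so the forward $g$-orbit of $z$ stays bounded; but $g^n z$ remains in the invariant leaf $\cF^{uu}(q)$, where the strong-unstable expansion forces $d(g^n z,q)\ge e^{-\lambda n}d(z,q)$. Boundedness then yields $z=q$, hence $w\in\cF^s(q)$. The symmetric argument, using $[\cdot,\cdot]^s$ and backward iteration, gives $w\in\cF^u(q)$, so $w\in\cF^s(q)\cap\cF^u(q)$, which by the local product structure and \eqref{formula1} coincides with $\OO_q$ near $q$.

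Finally I would close the argument topologically. If there were $q\in K\setminus\OO_p$, pick $q_n\in\OO_p$ with $q_n\to q$; for $n$ large we get $q_n\in U_q\cap K\subseteq\OO_q$, so $\OO_p$ and $\OO_q$ meet and hence coincide, giving $q\in\OO_q=\OO_p$, a contradiction. Therefore $K=\OO_p$, so $\OO_p$ is closed and thus compact.
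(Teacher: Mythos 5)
The paper does not prove this proposition; it is quoted verbatim from Barbot--Maquera \cite{maquera-11}, so there is no internal proof to compare against. Your overall strategy --- $g=\phi^a$ fixes $\overline{\OO_p}$ pointwise, fixed points of $g$ are locally confined to the orbit by transverse hyperbolicity, hence $\OO_p$ is closed and (since $M$ is compact, which is the standing assumption everywhere else in the paper even though this statement omits it) compact --- is the standard argument and is essentially sound. The reduction, the soft observations, and the topological endgame are all fine.

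The one step you state too loosely is ``boundedness then yields $z=q$.'' Your bound $d(g^nz,w)\le d(z,w)\le 2\delta$ lives in the ambient metric (via the weak-stable leaf metric, which dominates it), while the expansion estimate $d(g^nz,q)\ge e^{\lambda n}d(z,q)$ holds only in the \emph{induced leaf metric} of $\cF^{uu}(q)$; an unstable leaf can recur into a small ambient ball while its leafwise distance to $q$ diverges, so the two statements are not directly contradictory. To close this you need the comparability of leaf balls and ambient balls at a fixed small scale, which the paper records ($\cF^i_\delta(x)=\cF^i_\gamma(x)\cap B_\delta(x)$ for $\delta<\gamma$), together with a first-exit-time argument: if $z\ne q$, let $n_0$ be the first time the leafwise distance $d_{uu}(g^{n_0}z,q)$ exceeds $\gamma/L$ ($L$ a Lipschitz constant for $g$); then $g^{n_0}z\in\cF^{uu}_\gamma(q)\setminus\cF^{uu}_{\gamma/L}(q)$, hence $d_M(g^{n_0}z,q)\ge\gamma/L$, contradicting the ambient bound once $\delta$ is chosen small relative to $\gamma/L$. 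The same care is needed for your final assertion that $\cF^s(q)\cap\cF^u(q)$ is locally $\OO_q$. In fact you can shorten the second half: once $z=q$ gives $w\in\cF^s_{2\delta}(q)$, write $w\in\cF^{ss}(q_1)$ for some $q_1\in\OO_q\subseteq\mathrm{Fix}(g)$ by \eqref{formula1}; since both $w$ and $q_1$ are fixed by $g$, the strong-stable contraction gives $d(w,q_1)=d(g^nw,g^nq_1)\le e^{-\lambda n}d(w,q_1)\to 0$, so $w=q_1\in\OO_q$ directly, with no need for the symmetric weak-unstable decomposition. (The sign in your $e^{-\lambda n}$ merely reflects the paper's own inconsistent convention for $\lambda$ and is harmless.)
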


Every orbit is topologically equivalent to $\R^k$, $\T^k$ or a cylinder of the form $\R^{k-l}\times\T^l$ with $1\leq l \leq k-1$. The above proposition ensures that the third case does not occur because $\cA_0$ is a convex open cone, and every isotropy subgroup is a lattice. In Lie group theory, a lattice is a discrete subgroup $\Lambda$ of a Lie group $G$ such that the quotient $G/\Lambda$ is compact. Essentially, a lattice in a finite-dimensional real vector space $E$ is a subset $\Lambda \subset E$ such that there exists a basis $\{v_1,\ldots, v_r\}$ of $E$ for which $\Lambda$ consists of all integer linear combinations of $v_1,\ldots, v_r$.

\begin{proposition}[Barbot - Maquera \cite{maquera-11}]\label{reticuladoAnosov}
Let $\mS$ be a regular subcone of $\cA_ 0$. Then, every lattice of $\R^k$ contains Anosov elements in $\mS$. 
\end{proposition}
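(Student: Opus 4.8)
The plan is to exploit the two defining features of a regular subcone: it is open, and it is invariant under multiplication by positive scalars. First I would fix a point $v_0 \in \mS$. Since $\mS$ is open, there is a radius $r>0$ with $B(v_0,r)\subset \mS$, where $B(v_0,r)$ denotes the Euclidean ball of radius $r$ about $v_0$. Using that $\mS$ is a cone, for every $t>0$ the dilated ball $t\cdot B(v_0,r)=B(tv_0,tr)$ is again contained in $\mS$. Thus $\mS$ contains Euclidean balls of arbitrarily large radius, namely $B(tv_0,tr)$ with $tr\to\infty$ as $t\to\infty$.

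Next I would recall the elementary fact that a lattice $\Lambda\subset\R^k$ has finite covering radius. Concretely, by the definition recalled above, $\Lambda$ is generated over $\Z$ by a basis $\{w_1,\dots,w_k\}$ of $\R^k$, so the half-open parallelepiped $P=\{\sum_{i} s_i w_i : 0\le s_i<1\}$ is a bounded fundamental domain containing the origin. Writing any $x\in\R^k$ as $x=\lambda+p$ with $\lambda\in\Lambda$ and $p\in P$ gives $|x-\lambda|=|p-0|\le \rho$, where $\rho=\mathrm{diam}(P)$. Hence every point of $\R^k$ lies within distance $\rho$ of $\Lambda$, and in particular any Euclidean ball of radius larger than $\rho$ must contain at least one point of $\Lambda$.

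Combining the two observations finishes the argument: choose $t>0$ large enough that $tr>\rho$, which is possible because $r$ and $\rho$ are fixed and do not depend on $t$. Then $B(tv_0,tr)\subset\mS$ is a ball whose radius exceeds the covering radius of $\Lambda$, so it contains some $\lambda\in\Lambda$. Since $\lambda\in\mS\subset\cA_0\subset\cA$, this $\lambda$ is an Anosov element of $\phi$ lying in $\mS$, which is exactly what the statement asserts.

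I do not expect a genuine obstacle here. The geometric content is simply that an open solid cone, being scale-invariant, swallows balls of unbounded radius, while every full-rank lattice meets all sufficiently large balls; the only point requiring minor care is to verify that the dilation constant $t$ can indeed be selected after $r$ and $\rho$ have been fixed, which is immediate.
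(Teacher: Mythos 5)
Your argument is correct. Note that the paper itself gives no proof of this proposition --- it is imported verbatim from Barbot--Maquera \cite{maquera-11} --- so there is nothing in the text to compare against; but your reasoning (an open cone absorbs dilated balls of unbounded radius, while a full-rank lattice has finite covering radius, so some lattice point lands in $\mS\subset\cA_0\subset\cA$ and is therefore an Anosov element) is the standard and expected argument, and the only point needing care, that the dilation parameter is chosen after the radius $r$ and the covering radius $\rho$ are fixed, is handled explicitly.
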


In particular, it is possible to find a base of $\R^k$ whose elements belong to a specific isotropy subgroup of interest. To take advantage of the properties of $\mS$ along with the compact orbits, we introduce the concept of periodic points associated with $\mS$. 

\begin{definition}[The set of periodic points]
A point $x\in M$ is called a periodic point with respect to the regular subcone  $\mS$ if there exists $v \in \mS$ such that $x$ is a fixed point for  $\phi^v$.  The set of all such periodic points is denoted by  $\mathrm{Per}(\mS)$. 
\end{definition}

On the one hand, if  $x\in M$ is a periodic point with respect to $\mS$, then by Proposition \ref{orbcompact}, the orbit $\OO_x$ is compact. This implies that $\mathrm{Per}(\mS)$ is contained within the union of compact orbits. On the other hand, since the isotropy subgroup $\Gamma_p$ is a lattice by Proposition \ref{reticuladoAnosov}, the orbit compact $\OO_p$ is contained in $\mathrm{Per}(\mS)$. Therefore, the set $\mathrm{Per}(\mS)$ is precisely the union of all compact orbits of the Anosov action for any regular subcone $\mS$. 

\begin{definition}[Non-wandering set]\label{defconjnaoerr} A point $p \in M$is called non-wandering with respect to a regular subcone $\mS$ if, for any open set $U$ containing $p$ there exists $v\in \mS$, $||v|| > 1$, such that $\phi^vU \cap U \neq \emptyset$. The set of all non-wandering points is denoted by $\Omega(\mS)$.
\end{definition}

The non-wandering set is nonempty because $\mathrm{Per}(\mS)$ is contained within $\Omega(\mS)$. Note that if $x\in \Omega(\mS)$, there exist $a\in \mS$ such that $d(\phi^{ta}x,x)$ can be made arbitrarily small. By Theorem \ref{closing}, this implies that the derivative $\gamma'$ and $\gamma(t) - \delta$ are contained in $\mS$. In particular, the orbit $\OO_y$ of $y$  as considered in Theorem \ref{closing}, is compact by Proposition \ref{orbcompact}. Hence, $\mathrm{Per}(\mS)$ is dense in $\Omega(\mS)$. 

\begin{proposition}[Barbot - Maquera \cite{maquera-11}]	\label{orbitacompacdensa}
Let $\phi$ be an Anosov action of $\R^k$ on $M$, then for every regular subcone $\mS$, the union of compact orbits of $\phi$ is dense in  $\Omega(\mS)$. 
\end{proposition}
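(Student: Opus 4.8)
The plan is to reduce the statement to the density of $\mathrm{Per}(\mS)$ in $\Omega(\mS)$. As observed just above the statement, $\mathrm{Per}(\mS)$ is exactly the union of the compact orbits of $\phi$, so it suffices to show that every $p\in\Omega(\mS)$ is a limit of points of $\mathrm{Per}(\mS)$. Fix such a $p$ and a target $\varepsilon>0$; I would produce a point of $\mathrm{Per}(\mS)$ within distance $\varepsilon$ of $p$. Applying the non-wandering hypothesis to a ball $U=B_{\varepsilon'}(p)$, with $\varepsilon'$ to be chosen small, gives $v\in\mS$ with $\|v\|>1$ and a point $x\in U$ with $\phi^v x\in U$, so that $d(\phi^v x,x)<2\varepsilon'$. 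Since $\mS$ is a cone, I write $v=ta$ with $t=\|v\|>1$ and $a=v/\|v\|\in\mS\subseteq\cA_0$, which presents the near-return in the form $\phi^{ta}x$ required by the Closing Lemma.

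Next I would invoke Theorem \ref{closing} with this $x$, $t$ and $a$: once $2\varepsilon'$ is below the threshold $\varepsilon_0$, it produces a point $y$ and a differentiable curve $\gamma:[0,t]\to\R^k$, normalized so that $\gamma(0)=0$, with $\phi^{\gamma(t)}y=\phi^{\delta}y$, $\|\delta\|<C\,d(\phi^v x,x)$ and $\|\gamma'(s)-a\|<C\,d(\phi^v x,x)$ for all $s$. Evaluating estimate $(1)$ at $s=0$ gives $d(y,x)<C\,d(\phi^v x,x)<2C\varepsilon'$, whence $d(y,p)<(1+2C)\varepsilon'$, which is $<\varepsilon$ as soon as $\varepsilon'$ is small.

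It then remains to verify that $y\in\mathrm{Per}(\mS)$, that is, that $\OO_y$ is compact and its isotropy meets $\mS$. The pointwise bound $\|\gamma'(s)-a\|<2C\varepsilon'$ keeps $\gamma'(s)$ inside the open cone $\cA_0$ for every $s$, and since $\cA_0$ is convex this forces $\gamma(t)=\int_0^t\gamma'(s)\,ds\in\cA_0$; as $\|\delta\|$ is comparably small, the isotropy element $w:=\gamma(t)-\delta\in\Gamma_y$ also lies in $\cA_0$ and is therefore an Anosov element. By Proposition \ref{orbcompact} the orbit $\OO_y$ is compact, so $\Gamma_y$ is a full lattice of $\R^k$, and Proposition \ref{reticuladoAnosov} guarantees that this lattice already contains an Anosov element of $\mS$. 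Hence $y$ is periodic with respect to $\mS$, and $p$ is approximated by points of $\mathrm{Per}(\mS)$ as required.

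The main obstacle is the uniformity of the Closing Lemma constants. The direction $a=v/\|v\|$ is handed to us by the non-wandering hypothesis and may change as $U$ shrinks, ranging over the unit directions of $\mS$; a priori the threshold $\varepsilon_0$ could degenerate as $a$ approaches $\partial\cA_0$, and one also needs the perturbation of $\gamma'$ to remain small enough to keep $\gamma'(s)$ inside $\cA_0$. Both points are controlled by the fact that, for a regular subcone, the return directions can be kept within a compact subset of $\cA_0$, together with the constants $C,\lambda$ being fixed throughout the component $\cA_0$ and the continuous dependence of $\varepsilon_0$ on the element through $\phi^a$; this furnishes a single threshold $\varepsilon_0$ and an angular margin $\theta_0>0$ valid for every admissible $v$, after which a choice such as $\varepsilon'<\min\{\varepsilon_0/2,\;\theta_0/(4C),\;\varepsilon/(1+2C)\}$ makes the whole argument go through.
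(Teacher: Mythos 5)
Your argument is correct and follows essentially the same route the paper takes: the paper cites this result from Barbot--Maquera and only sketches the proof in the paragraph preceding the statement, namely applying the Closing Lemma (Theorem \ref{closing}) to a near-return inside $\mS$ guaranteed by non-wandering, concluding that the shadowing orbit is compact via Proposition \ref{orbcompact}, so that $\mathrm{Per}(\mS)$ is dense in $\Omega(\mS)$. Your write-up fleshes out the same sketch, and is in fact more careful than the paper on the point that $\gamma(t)-\delta$ need only land in $\cA_0$ (with Proposition \ref{reticuladoAnosov} then supplying an isotropy element in $\mS$ itself), and in flagging the uniformity of the Closing Lemma constants over the directions $a=v/\|v\|$.
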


The non-wandering set is dense in the manifold $M$ for many dynamical systems. In the case of Anosov flows or Anosov diffeomorphisms, this density implies that the system is transitive. With this in mind, the notion of transitivity for Anosov actions by regular subcones is introduced.

\begin{definition}\label{deftransubreg}
An Anosov action is said to be transitive by subcones if, for every regular subcone $\mS$, there exists a point $x\in M$ such that the orbit $\OO_{\mS}(x)$ is dense in $M$.
\end{definition}

Note that the point $x$ depends on the regular subcone $\mS$. 
However, this is not restrictive, as every codimension 1 Anosov action is transitive by subcones. 

\begin{theorem}[Barbot - Maquera \cite{maquera-11,barbot-nils}]\label{teortransubreg} Let $\phi$ be an Anosov action of codimension $1$  of $\R^k$ on a manifold $M$ of dimension greater than $k+2$. Then for every regular subcone $\mS$ there exists $x\in M$ such that $\OO_{\mS}(x)$ is dense in $M$.  
\end{theorem}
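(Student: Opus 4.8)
The plan is to reduce the statement to the single assertion that the non-wandering set fills the manifold, $\Omega(\mS)=M$, and then to upgrade this to the existence of a point with dense cone-orbit. The reduction is the routine half: once $\Omega(\mS)=M$, Proposition~\ref{orbitacompacdensa} gives that the compact orbits are dense in all of $M$, and together with the local product structure (Theorem~\ref{produtolocal2}) this places us in the setting of the spectral decomposition. Each basic piece is clopen in $\Omega(\mS)=M$, so connectedness of $M$ forces a single basic set; hence the action is transitive. Passing from two-sided transitivity to a dense $\mS$-orbit is then a Baire-category argument: using the Closing Lemma (Theorem~\ref{closing}) to keep shadowing orbits inside $\mS$ and Proposition~\ref{reticuladoAnosov} to find Anosov elements of $\mS$ in any relevant lattice, one checks that the points whose $\mS$-orbit is dense form a dense $G_\delta$.

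The substance of the proof, and the place where codimension one is indispensable, is the equality $\Omega(\mS)=M$. Assume $\dim E^{uu}=1$. Then the weak stable foliation $\mathcal{F}^s=E^{ss}\oplus T\phi$ has codimension one and is transverse to the one-dimensional, uniformly expanded foliation $\mathcal{F}^{uu}$. I would analyze the minimal sets of $\mathcal{F}^s$: each is the whole of $M$, a compact leaf, or an exceptional (locally Cantor) minimal set. No leaf is compact, since by~\eqref{formula1} every weak leaf contains a strong stable leaf $\mathcal{F}^{ss}(q)$, which is diffeomorphic to $\R^{\dim E^{ss}}$ and hence non-compact; in particular $\mathcal{F}^s$ carries no Reeb component. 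An exceptional minimal set is ruled out by the transverse hyperbolicity: its trace on a transversal is a Cantor set invariant under the holonomy along the expanded direction $\mathcal{F}^{uu}$, and uniform expansion of this holonomy produces resilient leaves and a ping-pong configuration incompatible with a Cantor minimal set. Therefore $\mathcal{F}^s$ is minimal, every weak stable leaf is dense, and feeding this leaf-density through the local product structure yields recurrence of cone-orbits at every point, i.e. $\Omega(\mS)=M$.

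The dimension hypothesis $\dim M>k+2$ enters precisely here: it forces $\dim E^{ss}\geq 2$, placing the leaf and transverse dimensions in the range where the minimal-set trichotomy and the resilient-leaf mechanism apply, and it excludes the degenerate case where $E^{ss}$ and $E^{uu}$ are both one-dimensional, in which transitivity can genuinely fail. The main obstacle is thus the exclusion of an exceptional minimal set for $\mathcal{F}^s$ under low regularity---the invariant foliations are only H\"older, so the classical Sacksteder/Novikov arguments cannot be quoted verbatim and the uniform expansion of $\mathcal{F}^{uu}$ must substitute for the $C^2$ smoothness that usually produces the contracting holonomy---together with the clean passage from ``every weak stable leaf is dense'' to $\Omega(\mS)=M$. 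Once these are settled, the spectral decomposition and the Baire-category production of a dense $\mS$-orbit are routine.
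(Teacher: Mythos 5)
First, note that the paper does not prove this statement at all: Theorem~\ref{teortransubreg} is imported verbatim from Barbot--Maquera \cite{maquera-11,barbot-nils}, so your proposal can only be measured against the published proof there. Your top-level skeleton is consistent with that proof: the substance is indeed to show $\Omega(\mS)=M$, after which a single basic set and a Baire-category argument produce a dense $\mS$-orbit, and the dimension hypothesis $\dim M>k+2$ (equivalently $\dim E^{ss}\geq 2$ when $\dim E^{uu}=1$) is indeed the crux. But the mechanism you propose for the crux is not the one in the literature and, as written, does not work.

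Two concrete problems. (i) Your exclusion of a compact leaf of $\cF^{s}$ is fallacious: a compact manifold can perfectly well contain a non-compact injectively immersed submanifold (an irrational line in $\T^2$ is the model case), so ``$\cF^{s}(q)$ contains the non-compact $\cF^{ss}(q)$, hence is non-compact'' proves nothing. The correct, standard exclusion is dynamical: a compact leaf of $\cF^{s}$ would be invariant under $\phi^{v}$ for $v\in\mS$, yet $\phi^{v}$ strictly contracts the volume element of $E^{ss}\oplus T\phi$, contradicting invariance of its finite volume. (ii) The exceptional-minimal-set step is both backwards and unsupported. Sacksteder's theorem \emph{produces} resilient leaves inside exceptional minimal sets of $C^2$ codimension one foliations; resilient leaves and ping-pong configurations are the signature of such sets, not an obstruction to them, so the claimed ``incompatibility'' is the opposite of what the theory says. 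You also correctly observe that $\cF^{s}$ is only H\"older transversally, so the $C^2$ machinery is unavailable, and the hoped-for substitution of uniform expansion of $\cF^{uu}$ for transverse smoothness is precisely the missing argument, not a remark. The actual Barbot--Maquera proof (following Verjovsky's argument for flows) does not pass through the minimal-set trichotomy at all: it runs a spectral-decomposition/attractor--repeller analysis of $\Omega(\mS)$ and derives a contradiction from a proper attractor using the fact that $\cF^{ss}(x)\setminus\{x\}$ is \emph{connected} when $\dim E^{ss}\geq 2$ --- this connectivity of punctured strong stable leaves, which fails exactly in the excluded case $\dim E^{ss}=1$, is where the dimension hypothesis enters. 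Finally, your last reduction also has a gap: density of every weak stable leaf does not by itself give $\Omega(\mS)=M$, because the returns of $\cF^{s}(x)$ to a neighborhood of $x$ must be realized by elements of the cone $\mS$ with large norm, and nothing in leaf-density controls the $\R^k$-direction of those returns.
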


%%%%%%%%%%%%%%%%%%%%%%%%%%%%%%%%%%%%%%%%%%%%%%%%%%%%%%%%%%%%%%
%%%%%%%%%%%%%%%%%%%%%%%%%%%%%%%%%%%%%%%%%%%%%%%%%%%%%%%%%%%%%%
%%%%%%%%%%%%%%%%%%%%%%%%%%%%%%%%%%%%%%%%%%%%%%%%%%%%%%%%%%%%%%
%%%%%%%%%%%%%%%%%%%%%%%%%%%%%%%%%%%%%%%%%%%%%%%%%%%%%%%%%%%%%%

 With all the preliminary results completed, the proof of Theorem \ref{th:dichotomy} is now addressed.

\section{Ingredients of the proof}

To enhance clarity, the proof of the main result is divided into distinct subsections, each with its own intrinsic interest. In the final subsection, these parts are combined to establish our main result, Theorem \ref{th:dichotomy}. 

\subsection{Density of stable and unstable foliation}
\label{secdensfort}

Let $\phi$ be a $\R^k$-Anosov action transitive by regular subcones on the compact connected manifold $M$. Fix a connected component $\cA_0$ of the Anosov set $\cA$, and let $\mS$ be a regular subcone.  In this context, the non-wandering set $\Omega(\mS)$ provides significant insight into the dynamics associated with the regular subcone $\mS$. The key tools for this analysis are the facts that $\mathrm{Per}(\mS)$ is dense in $\Omega(\mS)$, and that $\Omega(\mS)$ coincides with the manifold $M$ for every regular subcone $\mS$. 

The goal is to understand the density of the leaves of the weak foliations $\cF^u$ and $\cF^s$, as well as the strong foliations $\cF^{ss}$ and $\cF^{uu}$. Proposition \ref{prop:weak.foliation.density}establishes that every weak leaf is dense for any regular subcone $\mS$. For the strong leaves, Proposition \ref{Pdenso} shows that it is sufficient for the strong leaves to be dense at every point $p\in\mathrm{Per}(\mS)$.  

To study the density of the weak foliations $\cF^u$ and $\cF^s$, it is sufficient to examine a subset of each leaf. For a point $x\in M$, the following sets can be considered:

\begin{eqnarray} 
\cF^{u}_{\mS}(x) = \ds \bigcup_{v\in \mS}\phi^{v}\cF^{uu}(x)\ \mbox{and} \ 
 \cF^{s}_{\mS}(x) = \ds \bigcup_{v\in \mS}\phi^{-v}\cF^{ss}(x). 
\end{eqnarray}

The sets described are contained within the weak leaves. In particular, if $\cF^{u}_{\mS}(x)$ and $\cF^{s}_{\mS}(x)$ are dense in the manifold $M$, then the weak leaves $\cF^u(x)$ and $\cF^s(x)$ are also dense. Although the following result was established by Barbot and Maquera\cite{barbot-nils}, a simpler proof is provided here.

\begin{proposition}\label{prop:weak.foliation.density}
Let $\phi$ be an $\R^k$-Anosov action that is transitive by regular subcones. Then, for any regular subcone $\mS$ and for every point $x$, the sets  $\cF^{u}_{\mS}(x)$ and $\cF^{s}_{\mS}(x)$ are dense in $M$.
\end{proposition}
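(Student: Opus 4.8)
The plan is to reduce the statement to a single point carrying a dense orbit and then transport that density to an arbitrary point by means of the local product structure. Fix a regular subcone $\mS$ and, invoking transitivity by subcones (Definition \ref{deftransubreg}), choose $z_0 \in M$ whose subcone orbit $\OO_\mS(z_0)$ is dense. Since $z_0 \in \cF^{uu}(z_0)$ we have $\OO_\mS(z_0) \subseteq \cF^u_\mS(z_0)$, so $\cF^u_\mS(z_0)$ is already dense; the entire difficulty is to promote this to $\cF^u_\mS(x)$ for \emph{every} $x$. I would first record two elementary but decisive facts. First, because $\mS$ is a cone, $\phi^v \cF^u_\mS(x) \subseteq \cF^u_\mS(x)$ for every $v \in \mS$, and $\cF^u_\mS(x)$ is a union of entire strong unstable leaves. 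Second, since each $\phi^g$ with $g \in \R^k$ is a homeomorphism and $\OO_\mS(\phi^g z_0) = \phi^g(\OO_\mS(z_0))$, the orbit $\OO_\mS(\phi^g z_0)$ is dense for \emph{every} $g$, not only for $g = 0$.

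Now fix an arbitrary $x \in M$ and a nonempty open set $W$; the goal is to produce a point of $\cF^u_\mS(x)$ inside $W$. Choose $\delta < \delta_0$ below the product-structure scale of Theorem \ref{produtolocal2}, small enough that $W$ contains a nonempty open $W'$ whose $2\delta$-neighbourhood still lies in $W$. By density of $\OO_\mS(z_0)$ pick $w \in \mS$ with $q := \phi^w z_0$ within $\delta$ of $x$, so that $q$ lies in the image of the chart $[\cdot,\cdot]^u$ centered at $x$. Writing $q = [\eta,\zeta]^u$ yields $\zeta \in \cF^{uu}_\delta(x)$ with $q \in \cF^s_{2\delta}(\zeta)$; decomposing this weak-stable relation along the orbit and strong-stable directions (using \eqref{formula1}) produces a small $c \in \R^k$ with $\zeta \in \cF^{ss}(\phi^c q) = \cF^{ss}(\phi^{c+w} z_0)$ and $d(\zeta, \phi^{c+w} z_0) < 2\delta$.

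The point $\zeta$ lies in $\cF^{uu}(x) \subseteq \cF^u_\mS(x)$, and the contraction of $\cF^{ss}$ by every $\phi^v$ with $v \in \mS$ (the displayed inequalities of Section \ref{sec:preliminaries}, recorded for all $a \in \mS$) guarantees $d(\phi^v \zeta, \phi^{v+c+w} z_0) < 2\delta$ for all $v \in \mS$. Finally, since $\OO_\mS(\phi^{c+w} z_0)$ is dense by the second fact above, I can choose $v \in \mS$ with $\phi^{v+c+w} z_0 \in W'$; then $\phi^v \zeta \in W$, while $\phi^v \zeta \in \phi^v \cF^{uu}(x) \subseteq \cF^u_\mS(x)$. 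Hence $\cF^u_\mS(x) \cap W \neq \emptyset$ and $\cF^u_\mS(x)$ is dense. The assertion for $\cF^s_\mS(x)$ follows verbatim after reversing time, replacing $\cF^{uu}$, forward iterates and $\phi^v$ by $\cF^{ss}$, backward iterates and $\phi^{-v}$.

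The step I expect to be the genuine obstacle is precisely this promotion from the distinguished orbit of $z_0$ to an arbitrary $x$. A naive attempt to push a strong unstable disk of $x$ forward by $\phi^{ta}$ only shadows the orbit of $z_0$ along a single ray in parameter space, which need not be dense. The device that dissolves the difficulty is the second observation: because each $\phi^g$ is a homeomorphism, the orbit of every translate $\phi^g z_0$ is again dense, so one is free to shadow along the full cone $\mS$ rather than along a ray, and the local product structure is exactly what lets one land the strong unstable disk of $x$ onto a strong stable leaf through such a translate. The only other point demanding care is that the contraction estimate be available for all $v \in \mS$ rather than for a single Anosov element, which is precisely the uniform form in which the stable and unstable estimates were stated.
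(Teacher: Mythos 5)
Your argument is correct in substance, but it is a genuinely different proof from the one in the paper. You shadow a single dense $\mS$-orbit: you land a point $\zeta$ of $\cF^{uu}(x)$ on the strong stable leaf of a translate $\phi^{c+w}z_0$ of the transitive point via the local product structure, and then use the uniform non-expansion of strong stable leaves under $\phi^{v}$, $v\in\mS$ (writing $v=\|v\|\,a$ with $a\in\mS$), to drag $\phi^{v}\zeta$ into any prescribed open set. This is the classical Anosov--Plante mechanism for flows, and it needs only the definition of transitivity by subcones together with the adapted-metric estimates. The paper instead proves that $\overline{\cF^{u}_{\mS}(x)}$ is open and closed: it anchors at compact orbits (using $\Omega(\mS)=M$, the Closing Lemma, and the density of compact orbits from Proposition \ref{orbitacompacdensa}) to show every product neighborhood of a point of the closure lies in the closure, and concludes by connectedness of $M$. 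Your route is more elementary in that it avoids the Closing Lemma and compact orbits entirely. The one place where your ``verbatim after reversing time'' is not quite verbatim is the stable half: there you need a point whose \emph{backward} subcone orbit $\OO_{-\mS}$ is dense, and $-\mS$ is not a regular subcone of $\cA_0$, so this is not literally the hypothesis. It does follow by the standard Baire-category argument (a dense $\mS$-orbit gives topological transitivity, i.e.\ for all nonempty open $U,V$ some $v\in\mS$ has $\phi^{v}U\cap V\neq\emptyset$, which is symmetric under $v\mapsto -v$ after swapping $U$ and $V$, whence a residual set of points has dense $(-\mS)$-orbit), but you should say so explicitly; the paper's compact-orbit anchors are time-symmetric and sidestep this issue, which is a small advantage of its approach.
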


\begin{proof}
Given the transitivity of the action via regular subcones, it follows that $\Omega(\mS) = M$. For a fixed point $x\in M$ and any point $z\in \overline{\cF^{u}_{\mS}(x)}$, the local product structure, as stated in Theorem \ref{produtolocal2}, provides that for each $\delta>0$, the product neighborhood of $z\in M$ is given by
 $$ N_\delta(z)  = [\cF^{ss}_\delta(z), \cF^{u}_\delta(z)]^s.$$

Since the compact orbits are dense in $\Omega(\mS)$ by Proposition \ref{orbitacompacdensa}, there exists a compact orbit $L$ that intersects $N_\delta(z)$. By Theorem \ref{closing}, there exists $v\in \mS\cap \Gamma_{L}$ such that for all $y\in L\cap N_\delta(z)$, $\phi^v(y) = y$. Given that $y\in L\cap N_\delta(z)$, it follows that $\cF^{ss}(y)\cap\cF^{u}(z)\neq \emptyset$.  Furthermore, for all $w\in \cF^{ss}(y)\cap\cF^{u}(z)$, $\phi^{nv}(w)$ converge to $y$. 

Since $\phi^{v}\overline{\cF^{u}_{\mS}(x)}\subset\overline{\cF^{u}_{\mS}(x)}$, it follows that $L\cap N_\delta(z)$ is contained in $\overline{\cF^{u}_{\mS}(x)}$. The densities of compact orbits implies that $N_\delta(z) \subset \overline{\cF^{u}_{\mS}(x)}$, and by connectivity, $\overline{\cF^{u}_{\mS}(x)} = M$. Repeating this argument for a neighborhood $N_\delta(z) = [\cF^{s}_\delta(z),\cF^{uu}_\delta(z)]^u$ and iterating by $\phi^{-v}$, a similar reasoning establishes the density of $\cF_{\mS}^{s}(x)$. 
\end{proof}

%The density problem for strong foliations is considerably more complex than for weak foliations. Therefore, it suffices to verify that  $\cF^{uu}(p)$ and $\cF^{ss}(p)$ for all $p$ whose orbit is compact. This result is significant because, if there exists $p\in \mathrm{Per}(\mS)$ for which some strong leaf is not dense, Proposition \ref{Pfibrado} shows that there exists a transversal fibration to the action. 

Although the density problem for strong foliations is harder than for weak foliations, it suffices to verify that  $\cF^{uu}(p)$ and $\cF^{ss}(p)$ for all $p$ whose orbit is compact. This is a crucial result because, if there exists $p\in \mathrm{Per}(\mS)$ for which some strong leaf is not dense, Proposition \ref{Pfibrado} shows that there exists a transversal fibration to the action. 

\begin{proposition}\label{Pdenso} Let $\phi$ be a transitive Anosov action in regular subcones. Suppose that $\cF^{uu}(p)$  $(\cF^{ss}(p))$ is dense for every $p\in \mathrm{Per}(\mS)$, then $\cF^{uu}(x)$ $(\cF^{ss}(x))$ is dense in $M$ for all $x\in M$. 
\end{proposition}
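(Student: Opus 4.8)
The plan is to show that the set $D=\{x\in M:\overline{\cF^{uu}(x)}=M\}$ is all of $M$. By hypothesis $\mathrm{Per}(\mS)\subseteq D$, and since $\mathrm{Per}(\mS)$ is dense in $M$ the set $D$ is nonempty and dense. The first step is the standard foliation‑closure fact: for every $x$ the set $\overline{\cF^{uu}(x)}$ is closed and $\cF^{uu}$‑saturated. Indeed, if $w\in\overline{\cF^{uu}(x)}$ and $w_n\in\cF^{uu}(x)$ with $w_n\to w$, then continuity of the foliation $\cF^{uu}$, together with the compatibility between the leaf and ambient topologies noted in Section~\ref{sec:preliminaries}, shows that every point of the plaque $\cF^{uu}_\delta(w)$ is a limit of points of the plaques $\cF^{uu}_\delta(w_n)\subseteq\cF^{uu}(x)$; covering the whole leaf $\cF^{uu}(w)$ by plaques then gives $\cF^{uu}(w)\subseteq\overline{\cF^{uu}(x)}$. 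In particular $D$ is $\cF^{uu}$‑saturated, and because each $\phi^v$ is a homeomorphism with $\phi^v\cF^{uu}(x)=\cF^{uu}(\phi^v x)$, the set $D$ is also invariant under the action.

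The reduction I would carry out next is the following: it suffices to prove that $D$ is saturated by strong stable leaves, i.e.
\[
x\in D,\ y\in\cF^{ss}(x)\ \Longrightarrow\ y\in D.
\]
Granting this, $D$ is saturated simultaneously by $\cF^{ss}$, by the orbits, and by $\cF^{uu}$, hence by $\cF^{s}$ and $\cF^{uu}$. Since $\cF^{s}$ and $\cF^{uu}$ are complementary transverse foliations for which Theorem~\ref{produtolocal2} provides the product charts $[\cdot,\cdot]^{u}$, a set saturated by both contains a full product box $[\cF^{s}_\delta(x),\cF^{uu}_\delta(x)]^{u}$ around each of its points and is therefore open; the complement $M\setminus D$ is saturated by the same two foliations and is open for the same reason. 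As $M$ is connected and $D\neq\emptyset$, this forces $D=M$, which is the assertion of the proposition (the statement for $\cF^{ss}$ follows by reversing time).

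The heart of the matter is thus the \emph{saturation lemma}: if $\cF^{uu}(x)$ is dense and $y\in\cF^{ss}(x)$, then $\cF^{uu}(y)$ is dense. Since $\overline{\cF^{uu}(y)}$ is closed and $\cF^{uu}$‑saturated by the first step, it is enough to prove that $x\in\overline{\cF^{uu}(y)}$; for then $\cF^{uu}(x)\subseteq\overline{\cF^{uu}(y)}$ and density of $\cF^{uu}(x)$ gives $\overline{\cF^{uu}(y)}=M$. To locate points of $\cF^{uu}(y)$ near $x$ I would fix an Anosov element $a\in\mS$ and iterate forward: because $y\in\cF^{ss}(x)$ one has $d(\phi^{ta}x,\phi^{ta}y)\to 0$, so for large $t$ the points $\phi^{ta}x$ and $\phi^{ta}y$ lie within the uniform local‑product scale $\delta_0$ of Theorem~\ref{produtolocal2}, and the strong stable holonomy identifies plaques of $\cF^{uu}(\phi^{ta}x)$ with plaques of $\cF^{uu}(\phi^{ta}y)$. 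Using that $\cF^{uu}(\phi^{ta}x)=\phi^{ta}\cF^{uu}(x)$ is dense, together with the density of compact orbits (Proposition~\ref{orbitacompacdensa}) and the Closing Lemma (Theorem~\ref{closing}) to realize the necessary recurrences inside a prescribed product box, one transfers the density from $\cF^{uu}(\phi^{ta}x)$ to $\cF^{uu}(\phi^{ta}y)$ and pulls back by $\phi^{-ta}$.

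I expect the main obstacle to be precisely this transfer of density across the strong stable holonomy. The holonomy of Theorem~\ref{produtolocal2} is controlled only on plaques of bounded size, whereas density is a global property of the whole leaf; comparing $\cF^{uu}(x)$ and $\cF^{uu}(y)$ along large plaques is delicate because the forward iteration that contracts the strong stable direction (making the two base points close) simultaneously expands the strong unstable plaques one wishes to compare. The way I would get around this is to avoid a single global holonomy and instead argue along returning pieces of the leaves: the Closing Lemma and the density of compact orbits produce periodic points arbitrarily close to any target, whose strong unstable leaves are dense by hypothesis, so the comparison is always made at the uniform scale $\delta_0$ on a recurrent piece rather than over an unbounded plaque. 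Making this recurrence argument quantitative, with the holonomy estimates uniform in the base point, is the step I would expect to require the most care.
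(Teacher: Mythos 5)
Your overall architecture is genuinely different from the paper's: you run an open--closed argument on the set $D=\{x:\overline{\cF^{uu}(x)}=M\}$, reducing everything to the claim that $D$ is $\cF^{ss}$-saturated, after which saturation by $\cF^{s}$ and $\cF^{uu}$ plus the product charts of Theorem~\ref{produtolocal2} makes both $D$ and its complement open, and connectedness (or just density of $\mathrm{Per}(\mS)\subseteq D$) finishes. That skeleton is sound, and the preliminary steps ($\overline{\cF^{uu}(x)}$ is closed and $\cF^{uu}$-saturated, $D$ is action-invariant, openness from the $[\cdot,\cdot]^{u}$ chart) are all correct. The paper instead argues directly: it covers $M$ by product boxes $B_{\delta(\varepsilon/4)}(p_i)\subset N_{\varepsilon/4}(p_i)$ centered at periodic points, notes that an arbitrary leaf $W$ must enter some box and hence meets $\cF^{s}_{\delta(\varepsilon/4)}(p_j)$, and then iterates by the period $v_j\in\Gamma_{p_j}\cap\mS$: this fixes $p_j$, expands $\cF^{uu}_{\varepsilon/4}(p_j)$ until it reaches any target ball (possible because $\cF^{uu}(p_j)$ is dense by hypothesis), and does not increase weak-stable distances, so $W$ is carried into $B_\varepsilon(x)$.

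The genuine gap in your proposal is the saturation lemma itself, which you correctly identify as the heart of the matter but do not prove. The mechanism you sketch --- ``the strong stable holonomy identifies plaques of $\cF^{uu}(\phi^{ta}x)$ with plaques of $\cF^{uu}(\phi^{ta}y)$'' --- does not exist as stated: $E^{ss}\oplus E^{uu}$ misses the $k$ orbit directions, so a strong stable leaf through a point of $\cF^{uu}(x)$ does not in general meet $\cF^{uu}(y)$ at all. The only well-defined projection at the local-product scale is along \emph{weak} stable leaves onto an unstable plaque (the map $\pi_{y,z}$ of Section~3.3), and whether that projection respects $\cF^{ss}$ is precisely the simultaneous-integrability question the paper treats separately in Propositions~\ref{propeqint} and~\ref{propdint} --- it may fail, and in the dense case it must fail. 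Using the weak-stable projection instead introduces an orbit displacement that has to be absorbed somewhere; the paper absorbs it by anchoring the argument at periodic points, where iterating by an element of $\Gamma_{p_j}\cap\mS$ returns the base point exactly while the displacement stays bounded by non-expansion along $\cF^{s}$. For a non-periodic $x$ there is no such return, and your appeal to the Closing Lemma and density of compact orbits to ``realize the necessary recurrences'' is not carried out; as you yourself note, this is the step requiring the most care, and it is exactly the step that is missing. A workable fix is to abandon the saturation lemma for general $x$ and run the comparison only at periodic points, which collapses your argument into the paper's.
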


\begin{proof} We prove the result for the strong unstable foliation and a similar argument can be used to prove the result for the stable foliation.

For an arbitrary fixed leaf $W \in \cF^{uu}$, and for $x\in M$ and $\varepsilon >0$, it is shown that $W\cap B_\varepsilon(x) \neq \emptyset$. To demonstrate this, as stated in Theorem \ref{produtolocal2}, let $N_r(x)$ denote the following open set

\begin{equation}\label{Nrviz} N_r(x) = \ds \left[\cF^{s}_r(x),\cF^{uu}_r(x)\right]^u. \end{equation}

For any $r\in(0,\delta_0),$ there exists $\delta =  \delta(r)$ such that $B_\delta (x) \subset N_r(x)$. The transitivity hypothesis implies  $\Omega(\mS) = M$, and from the density of compact orbits, a finite cover $B_{\delta(\varepsilon/4)}(p_1)$, $\ldots$, $B_{\delta(\varepsilon/4)}(p_m)$ of $M$ can be chosen, where $p_i\in \mathrm{Per}(\mS)$. Since each $\cF^{uu}(p_i)$ is dense, there exists $T>0$ such that $\cF^{uu}_{T}(p_i)\cap B_{\varepsilon/2}(x)\neq \emptyset$ for all $i$, given the finiteness of the set $\{i\}$. Choosing $v_i \in \Gamma_{p_i}\cap\mS$, and considering the dynamics properties of the foliation $\cF^{uu}$, it follows that for $n,m \in \mathbb{Z}^+$ with $n<m$, $\phi^{nv_i}\left(\cF^{uu}_{\epsilon/4}(p_i)\right)$ is contained in $\phi^{mv_i}\left(\cF^{uu}_{\epsilon/4}(p_i)\right)$ and 

$$\cF^{uu}(p_i)=\bigcup_{n\in \mathbb{Z}^+}\phi^{nv_i}\left(\cF^{uu}_{\epsilon/4}(p_i)\right). $$

Consequently, since $\{i\}$ a finite set, there exists a sufficiently large $n\in \mathbb{Z}^+$ such that, for any $i$, it follows that $\cF^{uu}_{T}(p_i) \subset \phi^{nv_i}\left(\cF^{uu}_{\epsilon/4}(p_i)\right)$ and

\begin{equation}\label{eq:1}
 \phi^{nv_i}(\cF^{uu}_{\varepsilon/4}(p_i))\cap B_{\varepsilon/2}(x) \neq \emptyset .
\end{equation}

The manifold $M$ is covered by the collection $\{B_{\delta(\varepsilon/4)}(p_i)\}_i$; therefore, for some j, it holds that $W \cap B_{\delta(\varepsilon/4)}(p_j)\neq \emptyset$, and consequently $\phi^{mv_j}W\cap B_{\delta(\epsilon/4)}(p_j)\neq \emptyset$, for any $m\in \mathbb{Z}^+$. Indeed, since $W\in \cF^{uu}$ and $B_{\delta(\epsilon/4)}(p_j)$ is contained $N_{\epsilon/4}(p_j)$, there exists $z\in \cF^{s}_{\delta(\epsilon/4)}(p_j)\cap W$. As a result, for any $m\in\mathbb{Z}^+$, 
$$d(\phi^{mv_j}(z),p_j)= d(\phi^{mv_j}(z),\phi^{mv_j}(p_j))\leq d(z,p_j)<\delta(\epsilon/4).$$

Let $n\in \mathbb{Z}^+$ be such that the intersection \ref{eq:1} is satisfied, and let $j$ be such that $W \cap B_{\delta(\varepsilon/4)}(p_j)\neq \emptyset$. 
Consider $q\in \cF^{uu}_{\varepsilon/4}(p_j) \cap \phi^{-nv_j}(B_{\varepsilon/2}(x))$. According to the definition of $N_{\epsilon/4}(p_j)$ there exists $y \in W \cap \cF^{s}_{\varepsilon/2}(q)$. 
In particular, $\mathrm{d}(\phi^{nv_j}(y),\phi^{nv_j}(q))\leq \mathrm{d}(y,q)$. Therefore, 

$$ \mathrm{d}(x,\phi^{nv_j}(y)) \leq \mathrm{d}(x,\phi^{nv_j}(q)) + \mathrm{d}(\phi^{nv_j}(q),\phi^{nv_j}(y)) < \varepsilon.$$

Since $\phi^{nv_j}(y)\in \phi^{nv_j}W$, it follows that $\phi^{nv_j}W$ is dense in $M$. Given that $\phi^{nv_j}$ is a diffeomorphism, it follows that  $W$ must also be dense in $M$.

\end{proof}

%%%%%%%%%%%%%%%%%%%%%%%%%%%%%%%%%%%%%%%%%%%%%%%%%%%%%%%%%%%%%%%%%%%%%%%%%%%%%%%%%%%%%%%%%%%%%%%%%%%%%%%%%%%%%%%%%%%%%%%%%%%%%%%%%%%%%%%%%%%%%%%%%%%%%%%%%%%%%%%%%%%%%%%%%%%%%%%%%%%%%%%%%%%%%%%%%%%%%%%%%%%%%%%%%%%%%%%%%%%%%%%%%%%%%%%%%%%%%%%%%%%%%%%%%%%%%%%%%%%%%%%%%%%%
%%%%%%%%%%%%%%%%%%%%%%%%%%%%%%%%%%%%%%%%%%%%%%%%%%%%%%%%%%%%%%%%%%%%%%%%%%%%%%%%%%%%%%%%%%%%%%%%%%%%%%%%%%%%%%%%%%%%%%%%%%%%%%%%%%%%%%

\subsection{Existence of a transversal fibration to the action}
\label{secexisfib}

The leaves of the foliations $\cF^{ss}$ and $\cF^{uu}$ are not necessarily dense in $M$. For example, if the Anosov action $\phi$ is a suspension of an Anosov action of $\Z^k$, then each leaf of $\cF^{ss}$ and $\cF^{uu}$ are contained within a compact submanifold of codimension $k$ on $M$. Moreover, in this scenario, $M$ is a fibered space, with the fibers being invariant under the action. Assuming that at least one strong leaf is not dense, then, by Proposition \ref{Pdenso}, there exists a non-dense strong leaf that passes through a compact orbit. Under these conditions, the following results are obtained for $\cF^{uu}$, and for $\cF^{ss}$, similar conclusions can be derived using analogous arguments.

\begin{proposition}\label{prop:disjointunion}
    Let $\phi$ be a transitive Anosov action in regular subcones and $p\in \mathrm{Per}(\mS)$, then $M$ is the disjoint union of $\overline{\cF^{uu}(q)}$ with $q\in \mathcal{O}_p$.
\end{proposition}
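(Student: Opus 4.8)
The plan is to prove the two halves of ``disjoint union'' separately: first that the sets $\overline{\cF^{uu}(q)}$, $q\in\OO_p$, cover $M$, and then that two of them are either equal or disjoint. Throughout I use that $p\in\mathrm{Per}(\mS)$ makes $\OO_p$ a compact orbit (Proposition \ref{orbcompact}) with isotropy a lattice $\Gamma_p$, and I fix $v\in\mS\cap\Gamma_p$, which exists by Proposition \ref{reticuladoAnosov}. Since $v\in\Gamma_q=\Gamma_p$ for every $q\in\OO_p$, the diffeomorphism $\phi^v$ fixes $\OO_p$ pointwise and is Anosov; consequently each $\overline{\cF^{uu}(q)}$ is $\phi^v$-invariant, and being the closure of a leaf it is $\cF^{uu}$-saturated. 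Writing $q=\phi^u(p)$ we also have $\overline{\cF^{uu}(q)}=\phi^u\big(\overline{\cF^{uu}(p)}\big)$.

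For the covering, set $F=\overline{\cF^{uu}(p)}$, a compact set, and let $\OO_p\cong\R^k/\Gamma_p$ with fundamental domain $D$. Then $A:=\bigcup_{q\in\OO_p}\overline{\cF^{uu}(q)}=\bigcup_{u\in D}\phi^u(F)$ is the image of the compact set $\overline{D}\times F$ under the continuous map $(u,y)\mapsto\phi^u(y)$, hence compact and in particular closed. By \eqref{formula1} the weak leaf $\cF^u(p)=\bigcup_{q\in\OO_p}\cF^{uu}(q)$ is contained in $A$, and Proposition \ref{prop:weak.foliation.density} gives $\overline{\cF^u(p)}=M$; since $A$ is closed, $A=M$.

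For disjointness I would first reduce it to an equivalence relation on the orbit. Define $q\sim q'$ iff $q'\in\overline{\cF^{uu}(q)}$. This relation is reflexive, and transitive because $\cF^{uu}$-saturation gives $q'\in\overline{\cF^{uu}(q)}\Rightarrow\overline{\cF^{uu}(q')}\subseteq\overline{\cF^{uu}(q)}$. Symmetry is the first substantive point: if $q'\in\overline{\cF^{uu}(q)}$, pick $y_m\in\cF^{uu}(q)$ with $y_m\to q'$; since $\cF^{uu}(q)\subseteq\cF^u(p)=\cF^u(q')$, the splitting of $\cF^u$ into the orbit direction and the strong unstable direction (local freeness together with \eqref{formula1}) lets me write $y_m=\phi^{a_m}(w_m)$ with $a_m\to0$ and $w_m\in\cF^{uu}(q')$. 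Then $w_m\in\cF^{uu}(q')\cap\cF^{uu}(\phi^{-a_m}(q))$, so these strong unstable leaves coincide, whence $\phi^{-a_m}(q)\in\cF^{uu}(q')$; letting $m\to\infty$ gives $q\in\overline{\cF^{uu}(q')}$. Thus $\sim$ is an equivalence relation, its classes are exactly $\overline{\cF^{uu}(q)}\cap\OO_p$, and $\overline{\cF^{uu}(q)}=\overline{\cF^{uu}(q')}$ whenever $q\sim q'$.

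It then remains to show that a nonempty intersection forces $q\sim q'$, and this is the step I expect to be the main obstacle. Concretely, given $x\in\overline{\cF^{uu}(q)}\cap\overline{\cF^{uu}(q')}$ I want to conclude $\overline{\cF^{uu}(q)}=\overline{\cF^{uu}(q')}$. The cleanest route is to prove that each $\overline{\cF^{uu}(q)}$ is a \emph{minimal set} for $\cF^{uu}$, i.e.\ every strong unstable leaf it contains is dense in it; granting this, $\cF^{uu}(x)$ is dense in both closures and hence $\overline{\cF^{uu}(q)}=\overline{\cF^{uu}(x)}=\overline{\cF^{uu}(q')}$. To establish minimality I would argue as in the proof of Proposition \ref{prop:weak.foliation.density}: use the density of compact orbits (Proposition \ref{orbitacompacdensa}) and the Closing Lemma to place a periodic point in a product neighborhood $[\cF^{ss}_\delta(\cdot),\cF^{u}_\delta(\cdot)]^s$ of a given point of $\overline{\cF^{uu}(q)}$, and then contract by $\phi^{-nv}$ along strong stable directions to propagate membership in the closure from $\cF^{uu}(q)$ to an arbitrary leaf of the closure. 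Controlling this propagation uniformly, so that it returns information about the non-closed leaf rather than only about the ambient manifold, is the delicate part; the $\phi^v$-invariance of $\overline{\cF^{uu}(q)}$ and the local product structure of Theorem \ref{produtolocal2} are the tools I would rely on.
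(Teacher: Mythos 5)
Your covering half is fine (and the compactness argument via the image of $\overline{D}\times F$ is a clean way to get closedness), but the disjointness half --- which is the actual content of the proposition --- is left as an acknowledged sketch. You correctly identify that everything reduces to showing each $\overline{\cF^{uu}(q)}$ is $\cF^{uu}$-minimal, and then you say that establishing this is ``the delicate part'' without carrying it out. That is precisely the gap: the propagation argument you gesture at (Closing Lemma plus contraction by $\phi^{-nv}$) is the one used in Proposition \ref{prop:weak.foliation.density} to show density in $M$, and as you yourself note it does not obviously localize to give density \emph{inside} the closure of a single non-dense strong leaf. The paper avoids having to prove minimality of $\overline{\cF^{uu}(p)}$ head-on: it uses Zorn's lemma to extract some nonempty minimal closed $\cF^{uu}$-saturated set $K\subset\overline{\cF^{uu}(p)}$, for which ``nonempty intersection implies equality'' of the translates $\phi^v(K)$, $v\in\mS$, is automatic from minimality; it then uses the density of $\cF^u_{\mS}(\cdot)$ to show these translates cover $M$, hence partition it; and only \emph{a posteriori} does it conclude $K=\overline{\cF^{uu}(p)}$, because $p$ must lie in some translate $\phi^v(K)$, which is closed and saturated and therefore contains $\overline{\cF^{uu}(p)}$, forcing $\phi^{-v}(K)=K$ by minimality again. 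This Zorn's-lemma detour is the missing idea in your proposal.

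A secondary issue: your symmetry argument for the relation $q\sim q'$ assumes that $y_m\in\cF^{uu}(q)$ with $y_m\to q'$ in $M$ can be written as $y_m=\phi^{a_m}(w_m)$ with $a_m\to 0$ and $w_m\in\cF^{uu}(q')$. This uses the decomposition \eqref{formula1} of the weak leaf, but convergence $y_m\to q'$ in the ambient metric only places $y_m$ in a product neighborhood of $q'$; it does not place $y_m$ on the local weak-unstable plaque through $q'$, since the immersed leaf $\cF^u(q')$ may re-enter that neighborhood through plaques far from $q'$ in the leaf topology, in which case the corresponding $a_m$ need not tend to $0$. So even the symmetry of $\sim$ needs more care than you give it. Neither gap is fatal to the strategy, but as written the proof of disjointness is not complete.
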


\begin{proof} Let $p\in \mathrm{Per}(\mS)$. Zorn's lemma ensures the existence of a non-empty minimal set $K \subset \overline{\cF^{uu}(p)}$ satisfying the following conditions:

\begin{enumerate}
\item $K$ is closed in $M$;
\item $K$ is $\cF^{uu}-$saturated;
%\item $\phi^{r}(K)=K$. 
\end{enumerate}

Given any $v\in \mS$, the fact $\phi$ is Anosov action implies that $\phi^{-v} (K)$ and $\phi^v (K)$ satisfy the conditions $(1)$ and $(2)$. Thus, if $K\cap \phi^v(K)\neq \emptyset$ then  $K\cap \phi^{-v}(K)\neq \emptyset$, and $(1)$ and $(2)$ hold for these intersections.  Hence, the minimality of $K$ ensures that  $K\cap \phi^v(K)=K=K\cap \phi^{-v}(K)$, which implies $K\subset \phi^v(K)$ and $K\subset \phi^{-v}(K)$. Therefore, $K\cap \phi^v(K)\neq \emptyset$ implies that $K=\phi^v(K)=\phi^{-v}(K)$.  By conditions $(1), (2)$ and Proposition \ref{prop:weak.foliation.density}, it follows that
$$M=\displaystyle \bigcup_{v\in \mS}\phi^{v}(K).$$

Note that the union above forms a partition of $M$. Specifically, $v_1, v_2 \in \mS$ are such that  $\phi^{v_1}(K)\cap\phi^{v_2}(K)\neq \emptyset$
 then $\phi^{v_1}(K)=\phi^{v_2}(K)$. In particular, if $p\in \phi^v (K)$ for some $v\in \mS$, then from conditions $(1)$ and $(2)$ it follows that $\overline{\cF^{uu}(p)} \subset\phi^v(K)$ and so $K=\phi^v(K)=\overline{\cF^{uu}(p)}$. Since the orbit of $p$ is compact and $\mathcal{O}_p=\mathcal{O}_\mS(p)$, it follows that
$$M=\displaystyle\bigcup_{q\in\mathcal{O}_p}^{\cdot}\overline{\cF^{uu}(q)}.$$
\end{proof}

The previous result does not preclude the possibility that  $M= \overline{\cF^{uu}(p)}$. If $\cF^{uu}(p)$ is not dense in $M$, then $\overline{\cF^{uu}(p)}$ is initially just a topological space and may have a complicated structure. Therefore, it is necessary to endow  $\overline{\cF^{uu}(p)}$ with a manifold structure. Consequently, due to the absence of such structure, the following proposition does not fully prepare us for the main result.

\begin{proposition}\label{Pfibrado} Let $\phi$ be an Anosov action transitive by regular subcones. Let $p\in  \mathrm{Per}(\mS)$ be such that $\cF^{uu}(p)$ is not dense in $M$. Then $M$ is fibered over a torus $\mathbb{T}^k$ with fiber $K = \overline{\cF^{uu}(p)}$,  and the action $\phi$ is a suspension of the action of $\Z^{k}$ over $K$.  
\end{proposition}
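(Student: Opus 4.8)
The plan is to realize $M$ explicitly as the mapping torus of a $\Z^k$-action on the compact set $K=\overline{\cF^{uu}(p)}$, using the partition of $M$ furnished by Proposition \ref{prop:disjointunion}. First I would record the structure of the base. Since $p\in\mathrm{Per}(\mS)$, the isotropy group $\Gamma_p$ contains an Anosov element, so $\OO_p$ is compact by Proposition \ref{orbcompact}; together with Proposition \ref{reticuladoAnosov} this forces $\Gamma_p$ to be a full-rank lattice, whence $\Gamma_p\cong\Z^k$ and $\OO_p=\R^k/\Gamma_p\cong\T^k$. Because $E^{uu}$ is $D\phi^v$-invariant, each $\phi^v$ permutes strong unstable leaves, so $\phi^v(K)=\overline{\cF^{uu}(\phi^v(p))}$; hence the fibers in Proposition \ref{prop:disjointunion} are exactly the translates $\phi^v(K)$, indexed by $\phi^v(p)\in\OO_p$, and since $\cF^{uu}(p)$ is not dense this is a genuine partition $M=\bigsqcup_{q\in\OO_p}\overline{\cF^{uu}(q)}$ with $K\subsetneq M$.

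The first lemma I would prove is that the stabilizer $H=\{v\in\R^k:\phi^v(K)=K\}$ equals $\Gamma_p$. If $v\in\Gamma_p$ then $\phi^v(p)=p$, so $\phi^v$ fixes $\cF^{uu}(p)$ and hence $K$. Conversely, the partition gives $K\cap\OO_p=\{p\}$ (for $v\notin\Gamma_p$, the point $\phi^v(p)$ lies in the fiber $\phi^v(K)\neq K$, disjoint from $K$), so $\phi^v(K)=K$ implies $\phi^v(p)\in K\cap\OO_p=\{p\}$, i.e. $v\in\Gamma_p$. Using the partition I then define $\Pi:M\to\OO_p$ by sending $x$ to the unique $q$ with $x\in\overline{\cF^{uu}(q)}$, and show $\Pi$ is continuous: if $x_n\to x$ with $\Pi(x_n)=q_n\to q'$ along a subsequence (possible as $\OO_p$ is compact), choose lifts $q_n=\phi^{w_n}(p)$ with $w_n\to w'$; then $\phi^{-w_n}(x_n)\in K$ converges to $\phi^{-w'}(x)$, which lies in the closed set $K$, so $x\in\phi^{w'}(K)=\overline{\cF^{uu}(q')}$, and disjointness of the partition forces $q'=\Pi(x)$.

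Next I would establish local triviality. Fix a ball $B\subset\R^k$ about $0$ small enough that $(\overline B-\overline B)\cap\Gamma_p=\{0\}$ and consider $\Psi:\overline B\times K\to M$, $\Psi(v,y)=\phi^v(y)$. This map is continuous, and it is injective: $\phi^{v_1}(y_1)=\phi^{v_2}(y_2)$ gives $\phi^{v_1-v_2}(K)\cap K\neq\emptyset$, so $v_1-v_2\in H=\Gamma_p$, hence $v_1=v_2$ and $y_1=y_2$. Since $K$ is closed in the compact $M$, the domain $\overline B\times K$ is compact and $M$ is Hausdorff, so $\Psi$ is a homeomorphism onto its image. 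Restricting to $B\times K$ and using $\Pi(\phi^v(y))=\phi^v(p)$, I obtain a homeomorphism onto the open saturated set $\Pi^{-1}(U)$, with $U=\{\phi^v(p):v\in B\}$ open in $\OO_p$; translating by the $\R^k$-action trivializes $\Pi$ near every point of $\OO_p$. Thus $\Pi:M\to\OO_p\cong\T^k$ is a locally trivial fiber bundle with fiber $K$.

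Finally, for the suspension structure I would pass to $\widetilde\Psi:\R^k\times K\to M$, $\widetilde\Psi(v,y)=\phi^v(y)$, which is surjective since the translates $\phi^v(K)=\overline{\cF^{uu}(\phi^v(p))}$ exhaust $M$ by Proposition \ref{prop:disjointunion}, and which satisfies $\widetilde\Psi(v_1,y_1)=\widetilde\Psi(v_2,y_2)$ if and only if $v_1-v_2\in\Gamma_p$ and $y_2=\phi^{\,v_1-v_2}(y_1)$. Hence $\widetilde\Psi$ descends to a continuous bijection from the quotient of $\R^k\times K$ by the free $\Gamma_p$-action $\gamma\cdot(v,y)=(v+\gamma,\phi^{-\gamma}(y))$ onto $M$; this quotient is compact, so the bijection is a homeomorphism. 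Setting $\psi^{\gamma}:=\phi^{-\gamma}|_K$ (well defined as $\Gamma_p=H$ stabilizes $K$) and fixing an isomorphism $\Gamma_p\cong\Z^k$, this presentation is precisely the suspension of the $\Z^k$-action $\psi$ on $K$, with $\phi$ becoming translation in the $\R^k$-coordinate, as claimed. I expect the main obstacle to be the continuity of $\Pi$ and the verification that $\Psi$ gives honest trivializations, precisely because $K$ is a priori only a compact topological space with no manifold structure; every step must therefore rely on compactness of $K$, closedness of the fibers, and discreteness of the lattice $\Gamma_p$, rather than on any differentiable transversality.
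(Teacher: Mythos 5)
Your overall architecture is the same as the paper's: use Proposition \ref{prop:disjointunion} to write $M$ as the union of the translates $\phi^v(K)$ and project each point to the corresponding element of $\OO_p\cong\R^k/\Gamma_p\cong\T^k$. The paper's own proof essentially stops at writing down the formula for $\pi$; your verifications of continuity of $\Pi$, of local triviality via compactness of $\overline{B}\times K$, and of the explicit suspension homeomorphism are exactly the details it omits, and they are carried out correctly \emph{granted} your first lemma.

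That first lemma is where the gap is. Your proof that $H=\{v\in\R^k:\phi^v(K)=K\}$ equals $\Gamma_p$ is circular: to establish $K\cap\OO_p=\{p\}$ you assert that for $v\notin\Gamma_p$ the point $\phi^v(p)$ lies in a fiber $\phi^v(K)\neq K$ --- but $\phi^v(K)\neq K$ for $v\notin\Gamma_p$ is precisely the inclusion $H\subseteq\Gamma_p$ you are trying to prove. What the proof of Proposition \ref{prop:disjointunion} actually delivers is only that the translates $\phi^v(K)$ are pairwise equal or disjoint; it does not by itself rule out $\phi^v(K)=K$ for some $v\notin\Gamma_p$, and everything downstream (injectivity of $\Psi$, well-definedness of $\Pi$ with target $\R^k/\Gamma_p$, the identification of the deck group of the suspension with $\Gamma_p$) depends on this point. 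Note that $H$ is a closed subgroup of $\R^k$ containing the full-rank lattice $\Gamma_p$, and $H\neq\R^k$ since $K\neq M$. If $H$ is discrete, it is again a full-rank lattice and your entire argument goes through verbatim with $H$ in place of $\Gamma_p$, still yielding a fibration over a $k$-torus and a suspension of a $\Z^k$-action, so the statement survives. What genuinely requires an argument is that $H$ contains no line, i.e.\ that $K$ is not invariant under a one-parameter subgroup of the action; neither your proposal nor the paper's own (one-line) proof addresses this, so you should either prove it or at least isolate it explicitly as the remaining step rather than assume it inside a parenthesis.
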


\begin{proof}
If $p\in  \mathrm{Per}(\mS)$, the isotropy subgroup $\Gamma_p$ is a lattice in $\R^k$ and $\mS$ is a regular subcone. Thus, a base $v_1, \ldots, v_k$ of $\R^k$ can be chosen such that $v_i \in \Gamma_p\cap \mS$ and $tv_i\notin \Gamma_p$, for all $t \in (0,1)$. Since the orbit of $p$ is a compact set, for each $q\in \mathcal{O}_p\setminus \{p\}$ there is only one $(t_1,\ldots, t_k)$ with $t_i\in[0,1)$ such that $$\phi(t_1v_1+\cdots+t_kv_k,p)=q.$$

The Proposition \ref{prop:disjointunion} ensures that 
$$M=\displaystyle\bigcup_{q\in\mathcal{O}_p}^{\cdot}\overline{\cF^{uu}(q)}.$$

Therefore, the projection $\pi:M\rightarrow \mathbb{T}^k$ is defined by

$$\pi\left(\phi(t_1v_1+\cdots+t_kv_k,x)\right)=\left(t_1 (\mathrm{mod} 1), 
 \ldots, t_k(\mathrm{mod} 1)\right)$$    
\end{proof}

The projection $\pi$ depends on the choice of the basis $v_1,\ldots,v_k$, and the result above is just possible because $\mathcal{O}_p$ is compact.

%%%%%%%%%%%%%%%%%%%%%%%%%%%%%%%%%%%%%%%%%%%%%%%%%%%%%%%%%%%%%%%%%%%%%%%%%%%%%%%%%%%%%%%%%%%%%%%%%%%%%%%%%%%%%%%%%%%%%%%%%%%%%%%%%%%%%%%%%%%%%%%%%%%%%%%%%%%%%%%%%%%%%%%%%%%%%%%%%%%%%%%%%%%%%%%%%%%%%%%%%%%%%%%%%%%%%%%%%%%%%%%%%%%%%%%%%%%%%%%%%%%%%%%%%%%%%%%%%%%%%%%%%%%%%%%%%%%%%%%%%%%%%%%%%%%%%%%%%%%%%%%%%%%%%%%%%%%%%%%%%%%%%%%%%%%%%%%%%%%%%%%%%%%%%%%%%%%%%%%%%%%%%%%%%%%%%%%%%%%%%%%%%%%%%%%%%%%%%%%%%

% \section{Proof of Theorem \ref{teodualidade}}
\subsection{Simultaneaous integrability}
\label{secprovdic}

To prove the main result, it is assumed that the strong foliation (stable or unstable) is not dense in $M$. In this setting, for the $\mathbb{R}^k$ Anosov action, being transitive on regular subcones is topologically conjugated to a suspension. However, the result is inconclusive because $\overline{\cF^{uu}(p)}$ lacks the structure of a manifold and is merely a topological space. To address this, in Propositions \ref{propeqint} and \ref{propdint}, a relationship is established between the densities of the strong leaves and the integrability of the subbundle $E^{ss}\oplus E^{uu}$. The proof of the main theorem is completed by demonstrating that, under our hypothesis, the fiber $K$ (as given by Proposition \ref{Pfibrado})  is a leaf tangent to the subbundle $E^{ss}\oplus E^{uu}$. However, to achieve this, it is necessary to introduce the notion of  \emph{simultaneously integrable} for Anosov actions of $\R^k$ on a manifold $M$, following the ideas presented by Plante \cite{plante}. 

Let $N =  [\cF^{s}_\delta (x), \cF^{uu}_\delta (x)]^u$ be a product neighborhood, as Theorem \ref{produtolocal2}, of the point $x \in M$. If $y$ and $z$ are in the same strong stable leaf within $N$, then there exist $\rho>0$ such that the map $\pi_{y,z}: \cF^{s}_\rho(y) \rightarrow \cF^{s}_\delta(z)$, given by the projection along the unstable leaves, is well-defined.

\begin{definition}
\emph{The main foliations $\cF^{uu}$ and $\cF^{ss}$ are called} simultaneously integrable in the product neighborhood $N$ \emph{if for a given $y,z$ and $\rho$ as above}
\begin{equation*}\pi_{y,z}(\cF^{ss}(u)\cap\cF^{s}_\rho(y)) \subset \cF^{ss}(\pi_{y,z}(u))\cap \cF^{s}_\delta(z), \ \ \forall u\in \cF^{s}_\rho(y).\end{equation*} 
\emph{The foliations $\cF^{uu}$ and $\cF^{ss}$ are \emph{simultaneously integrable} if all point in $M$ lie in a product neighborhood $N$  where the foliations are simultaneously integrable. In this case, the subbundles $E^{ss}$ and $E^{uu}$ are called simultaneously integrable.}
\end{definition}

The goal is to relate the notion of simultaneous integrability with the concept of integrability for foliations. Recall that a subbundle $E$ is said to be integrable if there is a foliation with leaves tangent to $E$. The following result ensures that this definition is equivalent to the integrability of $E^{ss}\oplus E^{uu}$.

\begin{proposition}
\label{propeqint} Let $\phi$ be a transitive Anosov action by regular subcones.
$\cF^{uu}$ and $\cF^{ss}$ are simultaneously integrable if, and only if, $E^{ss}\oplus E^{uu}$ is integrable.
\end{proposition}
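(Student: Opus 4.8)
The plan is to establish the two implications separately. Since $E^{ss}$ and $E^{uu}$ are only H\"older continuous, the Frobenius theorem is unavailable, so the integral leaves of $E^{ss}\oplus E^{uu}$ must be built by hand out of the (already integrable) strong foliations $\cF^{ss}$ and $\cF^{uu}$, and simultaneous integrability is exactly the coherence condition that lets this construction succeed.

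For the easy direction, suppose $E^{ss}\oplus E^{uu}$ is integrable and let $\mathcal{W}$ be the foliation tangent to it. Because $E^{ss}$ and $E^{uu}$ are subbundles of $T\mathcal{W}$, and $\cF^{ss},\cF^{uu}$ are the unique foliations tangent to $E^{ss},E^{uu}$, each leaf $\mathcal{W}(q)$ contains $\cF^{ss}(q)$ and $\cF^{uu}(q)$. In a product neighborhood $N$ this means every plaque of $\mathcal{W}$ meets each weak stable plaque in a strong stable plaque, since $(E^{ss}\oplus E^{uu})\cap(E^{ss}\oplus T\phi)=E^{ss}$, and contains the whole strong unstable plaque through each of its points. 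Consequently the unstable holonomy $\pi_{y,z}$ permutes the plaques of $\mathcal{W}$ and therefore sends strong stable plaques of $\cF^{s}_\rho(y)$ into strong stable plaques of $\cF^{s}_\delta(z)$, which is precisely the defining inclusion of simultaneous integrability.

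For the converse, assume simultaneous integrability and, for $x\in M$ and $\delta$ small, set
\[ \mathcal{W}_\delta(x)=\bigcup_{v\in\cF^{uu}_\delta(x)}\cF^{ss}_\delta(v). \]
The first step is to show, using the inclusion $\pi_{y,z}(\cF^{ss}(u)\cap\cF^{s}_\rho(y))\subset\cF^{ss}(\pi_{y,z}(u))$, that this set is symmetric in the two strong foliations, i.e. that it also equals $\bigcup_{w\in\cF^{ss}_\delta(x)}\cF^{uu}_\delta(w)$; thus the grid of strong plaques closes up and $\mathcal{W}_\delta(x)$ does not depend on the order of integration. The second step is to prove that $\mathcal{W}_\delta(x)$ is an embedded submanifold of dimension $\dim E^{ss}+\dim E^{uu}$ whose tangent space at each $q$ is $E^{ss}(q)\oplus E^{uu}(q)$: the strong stable plaque through $q$ supplies $E^{ss}(q)$, the symmetric description supplies $E^{uu}(q)$, and these already span a subspace of the correct dimension. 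The final step is a consistency check: by the same holonomy coherence two such local leaves either coincide on their intersection or are disjoint, so they patch into a global foliation $\mathcal{W}$ tangent to $E^{ss}\oplus E^{uu}$, giving integrability.

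I expect the main obstacle to be the second step of the converse. A priori the union of strong plaques need not be a manifold, and with only H\"older regularity there is no infinitesimal criterion to fall back on; it is the holonomy-invariance of the strong stable foliation under the unstable projection that forces the plaques to assemble coherently and pins down the tangent distribution away from the base point $x$. Propagating this coherence across a finite cover by product charts, with a uniform choice of plaque radii so that all the relevant holonomies are simultaneously defined, is the technical heart of the argument.
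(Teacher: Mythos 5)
Your proposal is correct and its core is the same as the paper's: for the converse you build the local integral leaf as $\bigcup_{v\in\cF^{uu}_\delta(x)}\cF^{ss}_\delta(v)$ and use the holonomy coherence supplied by simultaneous integrability to show these local pieces are well defined and patch together; this is exactly the paper's disc $V(p,\delta/2)=[\cF^{ss}_{\delta/2}(p),\cF^{uu}_{\delta/2}(p)]^s\subset\bigcup_{y\in\cF^{uu}_{\delta/2}(p)}\cF^{ss}_\delta(y)$ and its globalization $L(p)=\bigcup_{y\in\cF^{uu}(p)}\cF^{ss}(y)$. The genuine difference is what happens after the local leaves are assembled. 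The paper does not stop at a topological foliation tangent to $E^{ss}\oplus E^{uu}$: it constructs explicit $C^1$ charts $\psi(\phi(t_1u_1+\cdots+t_ku_k,x))=(\eta(x),(t_1,\ldots,t_k))$ at each $p\in\mathrm{Per}(\mS)$, using a basis $\{u_i\}\subset\Gamma_p\cap\mS$ of the isotropy lattice as the transverse coordinates, and then invokes the density of $\mathrm{Per}(\mS)$ to conclude the foliation is $C^1$; this is the only place the transitivity hypothesis enters the proof. Your argument never uses transitivity and delivers only a foliation with leaves tangent to $E^{ss}\oplus E^{uu}$, which is all the proposition literally asserts (and is in that sense more economical), but it yields less regularity than the paper extracts. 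Two smaller remarks: your ``symmetry'' step does follow from the one-sided definition (apply $\pi_{x,v}$ with $v=\cF^{s}(q)\cap\cF^{uu}_\delta(x)$ to see that $q\in\cF^{uu}_\delta(w)$, $w\in\cF^{ss}_\delta(x)$, lies on $\cF^{ss}(v)$), so it is not an extra hypothesis; and the ``second step'' you flag as the technical heart is handled in the paper only by the same spanning observation you make, so on that point your sketch is no less complete than the published argument.
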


\begin{proof}
If $E^{ss}\oplus E^{uu}$ is integrable, then $\cF^{ss}$ and $\cF^{uu}$ are immediately simultaneously integrable. Now, assume that $\cF^{ss}$ and $\cF^{uu}$ are simultaneously integrable. Let $p\in M$ and  $y\in \cF^{uu}_{\delta/2}(p)$. Then the projection $\pi_{p,y}(p) = y$, and for any point $w\in \cF^{s}_{\delta/2}(p)$, the projection satisfies 
$$\pi_{p,y}(w) = \cF^{uu}_{\delta}(w)\cap \cF^{s}_{\delta}(y) = [w,y]^s.$$
Moreover, $[\cF^{ss}_{\delta}(p),y]^s=\pi_{p,y}(\cF^{ss}_{\delta/2}(p)) \subset \cF^{ss}_{\delta}(y)$, and the discs
$$ V(p,\delta/2) =[\cF^{ss}_{\delta/2}(p), \cF^{uu}_{\delta/2}(p)]^s = \ds \bigcup_{y\in\cF^{uu}_{\delta/2}(p)}^{\cdot}[\cF^{ss}_{\delta/2}(p),y]^s\subset \ds \bigcup_{y\in\cF^{uu}_{\delta/2}(p)}^{\cdot} \cF^{ss}_{\delta}(y).$$

Notice that the discs $V(p,\delta/2)$ are invariant under the regular subcone  $\mS$. This means that for every $v\in \mS $, there exists positive constants $\alpha$ and $\beta$ such that $\phi^{v}V(p,\alpha)$ is contained in $V(\phi^{v}(p),\beta)$. Additionally, within the neighborhood $N$, it can be observed that $L(p) = \ds \bigcup_{y\in \cF^{uu}(p)}\cF^{ss}(y)$ forms a submanifold of $M$ with codimension $k$. 

Let $\cF$ denote the collection of submanifolds described above, where the tangent bundle is $E^{ss}\oplus E^{uu}$. We will show below that  $\cF$  is of $C^1$ class. Considering the invariance of the subcone $\mS$,  a coordinate system $\psi: U \rightarrow \R^{k+n}$ ($k+n= \mathrm{dim}M$) can be defined for each point $p\in \mathrm{Per}(\mS)$ in the following manner.

The isotropy subgroup $\Gamma_p$ is a lattice on $\R^k$, allowing for the selection of a basis $\{u_1,\ldots,u_k\}$ of $\R^{k}$ in $\mS\cap \Gamma_p$. 
Let $B\subset L(p)$ be an open ball within the leaf  $L(p)$ of $\cF$ that contains the point $p$, and let $\eta:B\rightarrow \R^n$ be a $C^1$ embedding. Define $B^{k}_{\delta}$ the ball in $\R^k$ centered at $(1,0,\ldots,0)$ with radius $\delta$. Consider the set
 $$ U = \ds \bigcup_{(t_1,\ldots,t_k)\in B^{k}_\delta}\phi(t_1u_1+\cdots +t_ku_k,B)$$ 
 where if $\delta>0$ is sufficiently small, the map $B\times B^{k}_\delta \rightarrow U$ given by $(x,(t_1,\ldots,t_k))$ associated to $\phi(t_1u_1+\cdots +t_ku_k,x)$ is a homeomorphism and the linear combination $t_1u_1+\cdots+t_ku_k\in \mS$. Define the coordinate map $$\psi: U \rightarrow \R^{k+n},\ \psi(\phi(t_1u_1+\cdots +t_ku_k,x))=(\eta(x),(t_1,\ldots,t_k)).$$ 

The map $\psi$ is $C^1-$class, and since $\mathrm{Per}(\mS)$ is dense in $M$, the collections of charts $\{\psi\}$ determines a $C^1$ foliation $\cF$ of $M$. 
\end{proof}

Up to now, we have established that the density of strong leaves can be studied only at points $p \in \mathrm{Per}(\mS)$, Proposition \ref{Pdenso}. If some strong leave is not dense, then $\phi$ is topologically conjugated to a suspension of the $\Z^k$-action over $K$, Proposition \ref{Pfibrado}. To develop a differential structure for the fiber $K$, we need to relate the density of the strong leaves to the integrability of $E^{ss}\oplus E^{uu}$.

\begin{proposition}\label{propdint}
If a strong unstable leaf or strong stable leaf is not dense in $M$, then $\cF^{uu}$ and $\cF^{ss}$ are simultaneously integrable.
\end{proposition}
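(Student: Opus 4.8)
The plan is to route everything through the fibration furnished by Proposition \ref{Pfibrado} and to show that its fibers are saturated by \emph{both} strong foliations, from which simultaneous integrability will follow by a purely local argument using the projection. Assume, without loss of generality, that some strong unstable leaf fails to be dense (the strong stable case is identical after reversing time). By Proposition \ref{Pdenso} there is then a periodic point $p\in\mathrm{Per}(\mS)$ with $\cF^{uu}(p)$ not dense, and Propositions \ref{prop:disjointunion} and \ref{Pfibrado} give a continuous projection $\pi:M\to\T^k$ whose fibers are exactly the sets $K_\alpha=\overline{\cF^{uu}(q)}$, $q\in\OO_p$; each fiber is closed and $\cF^{uu}$-saturated, and the suspension structure yields $\pi\circ\phi^{w}=T_{\bar w}\circ\pi$, where $T_{\bar w}$ is translation by $\bar w=w\ (\mathrm{mod}\ \Gamma_p)$ on $\T^k$. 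The entire argument rests on upgrading this $\cF^{uu}$-saturation to saturation by $\cF^{ss}$ as well.

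The key lemma I would prove is that every strong stable leaf is contained in a single fiber. To see this, fix $x\in M$, take $w\in\cF^{ss}(x)$, and let $a\in\mS$ be an Anosov element. Since $d(\phi^{ta}x,\phi^{ta}w)\le e^{\lambda t}d(x,w)\to 0$ as $t\to\infty$, uniform continuity of $\pi$ (here $M$ is compact) gives $d_{\T^k}(\pi\phi^{ta}x,\pi\phi^{ta}w)\to 0$. But $\pi\phi^{ta}x=\pi(x)+\overline{ta}$ and $\pi\phi^{ta}w=\pi(w)+\overline{ta}$, so by translation invariance of the metric on $\T^k$ the quantity $d_{\T^k}(\pi\phi^{ta}x,\pi\phi^{ta}w)=d_{\T^k}(\pi(x),\pi(w))$ is independent of $t$; being a nonnegative constant that tends to $0$, it must vanish. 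Hence $\pi(x)=\pi(w)$, so $\cF^{ss}(x)$ lies in the fiber through $x$, and every $K_\alpha$ is saturated by both $\cF^{uu}$ and $\cF^{ss}$.

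It then remains to read off simultaneous integrability in a product neighborhood $N=[\cF^{s}_\delta(x),\cF^{uu}_\delta(x)]^{u}$. First I would observe that $\pi$ is injective on small orbit arcs: if $\phi^{w_1}\zeta$ and $\phi^{w_2}\zeta$ share the same $\pi$-image then $w_1-w_2\in\Gamma_p$, which forces $w_1=w_2$ once the arc is shorter than the smallest nonzero vector of the lattice $\Gamma_p$. Since $\pi$ is constant on strong stable plaques (key lemma) and a weak stable disc decomposes as $\cF^{s}_\delta(z)=\bigsqcup_{\zeta}\cF^{ss}_{\mathrm{loc}}(\zeta)$ over a small orbit arc, distinct plaques receive distinct $\pi$-values; therefore $K_\beta\cap\cF^{s}_\delta(z)$ is a single strong stable plaque for each $\beta$. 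Now take $y,z$ on a common strong stable leaf in $N$, $u\in\cF^{s}_\rho(y)$, and $u'\in\cF^{ss}(u)\cap\cF^{s}_\rho(y)$. By the key lemma $u,u'$ lie in one fiber $K_\beta$; since fibers are $\cF^{uu}$-saturated, $\pi_{y,z}(u)\in\cF^{uu}(u)\subset K_\beta$ and $\pi_{y,z}(u')\in\cF^{uu}(u')\subset K_\beta$, so both images lie in $K_\beta\cap\cF^{s}_\delta(z)$, a single strong stable plaque. Thus $\pi_{y,z}(u')\in\cF^{ss}(\pi_{y,z}(u))$, which is exactly the simultaneous integrability condition.

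The main obstacle is that at this stage the fibers $K_\alpha$ are only topological sets, with no a priori manifold or tangency structure, so none of the argument may appeal to the subbundle $E^{ss}\oplus E^{uu}$ (that is precisely the output of Proposition \ref{propeqint}, to be invoked afterwards, not before). The plan therefore forces everything through the continuous projection $\pi$: the contraction-plus-translation trick is what converts dynamical convergence along $\cF^{ss}$ into the combinatorial identity $\pi(x)=\pi(w)$, and local injectivity of $\pi$ along orbits is what separates distinct strong stable plaques inside a weak stable disc. The only points needing genuine care are the continuity of $\pi$ (guaranteed by the fibration in Proposition \ref{Pfibrado}) and the transversality of the orbit direction to the fibers, both of which I would extract directly from the suspension normal form $\pi\circ\phi^{w}=T_{\bar w}\circ\pi$.
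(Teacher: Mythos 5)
Your proposal is correct and follows essentially the same route as the paper's proof: both establish that the fibers $\overline{\cF^{uu}(q)}$ are also $\cF^{ss}$-saturated via contraction along an Anosov element, and both conclude by observing that the unstable holonomy $\pi_{y,z}$ preserves fibers while a fiber can meet a small weak stable disc in only one strong stable plaque (the paper phrases this last step as the contradiction $\phi(t_1v_1+\cdots+t_kv_k,K)\cap K\neq\emptyset$ for all small $t$, which is exactly your injectivity of $\pi$ on short orbit arcs). Your write-up is somewhat more explicit than the paper's, but the underlying argument is the same.
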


\begin{proof} Assume that some leaf $\cF^{uu}$ is not dense. By Proposition \ref{Pdenso} there exists a $\cF^{uu}(p)$ that is not dense in $M$ for some point $p\in \mathrm{Per}(\mS)$. Denote by $\Gamma=\Gamma_{\mathcal{O}_p}$ the isotropy subgroup.  Let $K= \overline{\cF^{uu}(p)}$, the fiber that saturates the foliation $\cF^{uu}$, as stated in Proposition \ref{Pfibrado}. Notice that $K$ is also saturated by $\cF^{ss}$. Indeed, given $y$ and $z$ points in the same strong stable leaf, for any $v_i\in \Gamma\cap \mS $ as per Proposition \ref{Pfibrado}, it holds that

$$\ds \lim_{n\rightarrow \infty}\mathrm{d}(\phi^{nv_i}(y),\phi^{nv_i}(z)) = 0, \ \ i=1,\ldots, k.$$

Since each $\phi^{nv_i}$, $n\in \mathbb{Z}$, leaves invariant each fiber, it follows that $y$ and $z$ must lie in the same fiber. In other words, the fibers are saturated by $\cF^{ss}$. Consider the map $\pi_{y,z}: \cF^{s}_\rho(y)\rightarrow \cF^{s}_\delta(z)$ as previously defined, where $y$ and $z$ are in the same unstable product neighborhood. Since $\pi_{y,z}$ is the projection along the foliation $\cF^{uu}$, the sets $\pi_{y,z}(\cF^{ss}(y)\cap \cF^{s}_\rho(y))$ and $\cF^{ss}(y)\cap \cF^{s}_\rho(y)$ remain within the same fiber, as the fibers are saturated.

Assume that $\pi_{y,z}(\cF^{ss}(y)\cap \cF^{s}_\rho(y))$ is not contained in $\cF^{ss}(z) \cap \cF^{s}_\delta(z)$. Then there exists a neighborhood $U$ of $0$ in $\R^k$ such that for all $t\in U$. it follows that $\phi(t_1v_1+\cdots+t_kv_k,K)\cap K\neq \emptyset$. This situation contradicts Proposition \ref{Pfibrado}; therefore, the result follows. 
\end{proof}

We are now ready to conclude the dichotomy between the densities of strong stable and unstable leaves and the action being a suspension of a $\Z^k$ action.

\subsection{Proof of Theorem \ref{th:dichotomy}}

 If any unstable or stable leaf is not dense in $M$, then $E^{uu}\oplus E^{ss}$ is integrable, as established by Propositions \ref{propeqint} and \ref{propdint}. Let $L$ be a leaf of the foliation $\cF$ that integrates $E^{uu}\oplus E^{ss}$. Given that $K$, as determined in Proposition \ref{Pfibrado}, is $\cF^{uu}-$saturated and, as noted in Proposition \ref{propdint}, is also $\cF^{ss}-$saturated, $K$ can be chosen so that the leave $L$ is contained within $K$. Since $L$ includes a strong unstable leaf, it follows that $L$ is dense in $K$.

Suppose that $L$ is not closed and let $z\in \overline{L}$. For a given $\beta>0$, define the  cross-section $\Sigma(z) = [\cF^{uu}_\beta(z), \cF^{ss}_\beta(z)]^s$. Using this definition and $v_i$ as determined in Proposition \ref{Pfibrado}, the following neighborhood can be constructed: $$V(z) = \ds \bigcup_{t_1 = 0}^1 \cdots \bigcup_{t_k=0}^1 \phi(t_1v_1+\cdots+t_kv_k, \Sigma(z)).$$

Thus, there exist points $p$ and $q$ in $L\cap V(z)$ that are arbitrarily close in the metric of $M$, yet arbitrarily distant in the induced metric of $L$. Given that $E^{ss}$ and $E^{uu}$ are simultaneously integrable, there exists $\alpha>0$ such that the cross-sections  $\Sigma(p) = [\cF^{uu}_\alpha(p), \cF^{ss}_\alpha(p)]^s$ and $\Sigma(q) = [\cF^{uu}_\alpha(q), \cF^{ss}_\alpha(q)]^s$ are contained in $L\cap V(z)$. 

This leads to a contradiction, as it implies the existence of $(t_1,\cdots, t_k)$ with arbitrarily small norm such that $\phi(t_1v_1+\cdots+t_kv_k, K)\cap K \neq \emptyset$. Consequently, $L$ must be closed, leading to the conclusion that $L= K$. This completes the proof of our main result.

 \textit{Acknowledgements.} R.R.L. was supported by CNPq grant 165832/2014-2, C.M. was supported by Fundação de Amparo à Pesquisa do Estado de São Paulo (FAPESP) (grant 2022/16455-6),  R.V. was partially supported by Conselho Nacional de Desenvolvimento Cient\'ifico e Tecnol\'ogico (CNPq) (grants 313947/2020-1 and 314978/2023-2), and partially supported by Fundação de Amparo à Pesquisa do Estado de São Paulo (FAPESP) (grants 2016/22475-9, 17/06463-3 and 18/13481-0).

\bibliographystyle{plain}
\bibliography{references}

\begin{thebibliography}{1}

\bibitem{anosov-monograph}
D.~V. Anosov.
\newblock {\em Geodesic flows on closed {R}iemann manifolds with negative
  curvature}.
\newblock Proceedings of the Steklov Institute of Mathematics, No. 90 (1967).
  Translated from the Russian by S. Feder. American Mathematical Society,
  Providence, R.I., 1969.

\bibitem{barbot-11}
Thierry Barbot and Carlos Maquera.
\newblock On integrable codimension one {A}nosov actions of {$\mathbb{R}^k$}.
\newblock {\em Discrete Contin. Dyn. Syst.}, 29(3):803--822, 2011.

\bibitem{maquera-11}
Thierry Barbot and Carlos Maquera.
\newblock Transitivity of codimension-one {A}nosov actions of $\mathbb{R}^k$ on
  closed manifolds.
\newblock {\em Ergodic Theory and Dynamical Systems}, 31(01):1--22, 2011.

\bibitem{barbot-nils}
Thierry Barbot and Carlos Maquera.
\newblock Nil-{A}nosov actions.
\newblock {\em Math. Z.}, 287(3-4):1279--1305, 2017.

\bibitem{hirsch}
M.~W. Hirsch, C.~C. Pugh, and M.~Shub.
\newblock Invariant manifolds.
\newblock {\em Bull. Amer. Math. Soc.}, 76:1015--1019, 1970.

\bibitem{katok-94}
Anatole Katok and Ralf~J Spatzier.
\newblock First cohomology of {A}nosov actions of higher rank abelian groups
  and applications to rigidity.
\newblock {\em Publications Math{\'e}matiques de l'Institut des Hautes
  {\'E}tudes Scientifiques}, 79(1):131--156, 1994.

\bibitem{plante}
Joseph~F Plante.
\newblock Anosov flows.
\newblock {\em American Journal of Mathematics}, 94(3):729--754, 1972.

\bibitem{pugh-shub}
Charles Pugh and Michael Shub.
\newblock Ergodicity of {A}nosov actions.
\newblock {\em Inventiones mathematicae}, 15(1):1--23, 1972.

\end{thebibliography}
\end{document}